\documentclass[12pt]{article}
\usepackage{graphicx}
\usepackage{amsfonts,amsmath,amssymb,amsthm, tikz}
\usepackage{fullpage,parskip}
\usepackage{authblk}
\usepackage{enumerate}

\newtheorem{theorem}{Theorem}[section]
\newtheorem{corollary}[theorem]{Corollary}
\newtheorem{lemma}[theorem]{Lemma}

\title{The Gamma-switch Ramsey Number}

\author[1]{Christopher Duffy}
\author[1]{Benjamin Fok}
\author[2]{Gary MacGillivray}
\affil[1]{School of Mathematics and Statistics, University of Melbourne, Australia}
\affil[2]{Department of  Mathematics and Statistics, University of Victoria, Canada}

\date{}
\begin{document}

\maketitle

\begin{abstract}
We define and develop preliminary theoretical results for the $\Gamma$-switch Ramsey number, a variation on the classical $m$-colour Ramsey number for which we allow permuting the colours incident with a vertex using elements of a group $\Gamma \leq S_m$.
We find bounds for the $\Gamma$-switch Ramsey number for groups with various properties as a function of the classical parameter.
We prove  $R_{C_3}(4,4,4) = R(3,3,3) + 1$, $R_{C_4}(3,4,3,4) = R(2,3,2,3) + 1$ and $43 \leq R_{C_4}(4,4,4,4) \leq R(3,3,3,3) + 1$.
\end{abstract}


\section{Introduction and Preliminaries}
Let $G$ be an $m$-edge-coloured complete graph.
Let $c(e) \in \{1,2,\dots,m\}$ denote the colour of edge $e$.
Let $\Gamma$ be a subgroup of $S_m$ and let $\pi \in \Gamma$.
For $v \in V(G)$, we \emph{switch at $v$ with $\pi$} by changing the colours of the edges incident with $v$ with respect to permutation $\pi$.
We denote by $G^{(\pi,v)}$ the resulting $m$-edge-coloured graph.
For all $1 \leq i \leq m$, an edge incident with $v$ of colour $i$ in $G$ has colour $\pi(i)$ in $G^{(\pi,v)}$.

We define 
\[G^{(\pi_1,v_1),(\pi_2,v_2)}= (G^{(\pi_1,v_1)})^{(\pi_2,v_2)},\]
and consider the notion of a \emph{switching sequence},
$\Pi = (\pi_1,v_1),(\pi_2,v_2),\dots, (\pi_k,v_k)$, which defines a sequence of switches at vertices of $G$.
The sequence $v_1,v_2,\dots, v_k$ is a sequence of not necessarily distinct vertices of $G$.
And the sequence $\pi_1, \pi_2,\dots, \pi_k$ is a sequence of not necessarily distinct elements of $\Gamma$.
We denote the resulting $m$-edge-coloured complete graph as $G^\Pi$.
That is 
\[
G^\Pi = G^{ (\pi_1,v_1),(\pi_2,v_2),\dots, (\pi_k,v_k)}.
\]
The $m$-edge-coloured graph $G^\Pi$ results from sequentially switching at $v_i$ with $\pi_i$ for $1 \leq i 
\leq k$.

For fixed $\Gamma \leq S_m$ the notion of $\Gamma$-switching defines an equivalence relation, $\sim$, on the set of all $m$-edge-coloured complete graphs on the same vertex set.
We say $G$ and $H$ are $\Gamma$-\emph{switch equivalent} when there exists a switching sequence $\Pi$ such that $G^\Pi = H$.
Let $\mathcal{K}^m_n$ be the set of all $m$-edge-coloured complete graphs on $n$ vertices with vertex set $\{u_1,u_2,\dots, u_n\}$.
For $G \in \mathcal{K}^m_n$, let  $[G]$ denote the equivalence class of $G$ with respect to this equivalence relation.

Let $\Gamma$ be a fixed subgroup of $S_m$.
In this work, we define the \emph{$\Gamma$-switch Ramsey number}.
Let $K_t^{(i)}$ denote a  copy of $K_t$ where all edges have colour $i$.
Let $a_1,a_2,\dots, a_m \geq 2$.
The parameter $R_{\Gamma}(a_1,a_2,\dots, a_m)$ is defined to be the least integer $n$ such that every
$m$-edge-coloured complete graph on $n$ vertices is $\Gamma$-switch equivalent to one that contains, for some $1 \leq i \leq m$, a copy of $K_{a_i}^{(i)}$.
In other words  $R_{\Gamma}(a_1,a_2,\dots, a_m)$ is the least integer $n$ such that every equivalence class of $\mathcal{K}^m_n/\sim$ contains a graph that contains, for some $1\leq i \leq m$, a copy of $K_{a_i}^{(i)}$.

Given the significant volume of work on the classical Ramsey numbers,
we refer the reader to the comprehensive dynamic survey on Ramsey numbers \cite{EJCDynamic} rather than provide further background here.
Though there has been no past work on the $\Gamma$-switch Ramsey numbers directly, in 2024 Mutar, Sivaraman and Slilaty provided the first results on the signed Ramsey number \cite{M24}. 
The signed Ramsey number is defined for the class of graphs where each edge is assigned a sign, $+$ or $-$.
The signed Ramsey number $R_{\pm}(a,b)$ is the least integer $n$ such that every complete signed graph on $n$ vertices is equivalent under the re-signing operation to one containing a $K_a$ with all positive edges or a $K_b$ with all negative edges.
By replacing positive and negative edges with red and blue edges (i.e., edges of colour $1$ and $2$), it follows directly that $R_{\pm}(a,b) = R_{S_2}(a,b)$.
In addition to the values given in Table \ref{tab:smallNumbers}, Mutar, Sivaraman and Slilaty prove $R_{S_2}(3,t) = t$ for all $t \geq 3$.
Our work herein on $\Gamma$-switching in $m$-edge-coloured graphs can be viewed as one possible generalisation of the signed Ramsey number.

\begin{table}[ht]
	\label{tab:smallNumbers}
	\begin{center}
		\begin{tabular}{c|c|c}
			& $R_{S_{2}}(\cdot,\cdot)$ & $R(\cdot,\cdot)$ \\
			\hline
			$(3,3)$ & $3$ & $6$\\
			$(3,4)$ & $4$ & $9$\\
			$(4,4)$ & $7$ & $18$\\
			$(4,5)$ & $8$ & $25$\\
		\end{tabular}
	\end{center}
	\caption{Small $S_2$-switch Ramsey Numbers.
    }
\end{table}

First introduced in the context of behavioural sciences by Abelson and Rosenberg \cite{A58} and developed by Zasavsky in the context of signed graphs \cite{zaslavsky1982}, switching on $2$-edge-coloured graphs was extended by Brewster and Graves \cite{B09} to switching in an $m$-edge-coloured graph with respect to a cyclic group as part of their consideration of homomorphisms under the equivalence relation defined by the switching operation.
This work was extended by Kidner in their masters thesis regarding switching operations and $2$-colourings of $(m,n)$-mixed graphs \cite{K21}.
In \cite{B25} the authors give a dichotomy theorem for $\Gamma$-switchable-colouring on $m$-edge-coloured graphs.

For graph and group theoretic notation not defined herein, we refer the reader respectively to \cite{bondy} and \cite{G07}.
We denote the set $\{1,2,\dots,m\}$ by $[m]$.

In Section \ref{sec:gen} we provide general bounds for a $\Gamma$-switch Ramsey number for various classes of groups, including abelian, semi-regular and groups for which the commutator subgroup is non-trivial.
In Section \ref{sec:trans} we focus our attention on groups that act transitively on the set of colours, giving particular attention to cyclic groups of odd and even order. 
Here we prove $R_{C_4}(3,4,3,4) = R(2,3,2,3) + 1$,  $R_{C_3}(4,4,4) = R(3,3,3) + 1$ and $43 \leq R_{C_4}(4,4,4,4) \leq R(3,3,3,3) + 1$.
This latter result provides a possible new approach to the long-standing open problem of improving the lower bound on $R(3,3,3,3)$.

\section{General Results}\label{sec:gen}

When $\Gamma$ acts intransitively on $[m]$, the set of colours,  we find an upper bound on the $\Gamma$-switch Ramsey number using the classical parameter where the colours correspond to the orbits of the action of $\Gamma$ on $[m]$.
Our result below follows from the following observation: if $G$ can be $\Gamma$-switched to contain a complete monochromatic subgraph, then all of the edges in that subgraph must have originally been in the same orbit of the action of $\Gamma$ on $[m]$.

\begin{theorem}\label{thm:inTrans}
	Let $\Gamma \leq S_m$.
	Let  $\{O_1,O_2,\dots, O_t\}$ be the set of orbits of the action of $\Gamma$ on $[m]$.
	Let $O_i = \{c_{1,i}, c_{2,i},\dots c_{|O_i|,i}\}$.
	For $n_i = R_{\Gamma|_{O_i}}(a_{c_{1,i}}, a_{c_{2,i}},\dots a_{c_{|O_i|,i}})$,
	\[
	R_\Gamma(a_1,a_2,\dots, a_m) \leq R(n_1,n_2,\dots,n_t).
	\]
\end{theorem}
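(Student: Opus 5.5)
Let $N = R(n_1,n_2,\dots,n_t)$ and let $G \in \mathcal{K}^m_N$ be arbitrary. The plan is a two-level Ramsey argument. First coarsen the colouring, then use the orbit structure to reduce to the restricted group inside one orbit. Define an auxiliary $t$-edge-colouring $G'$ of the same complete graph by giving an edge $e$ colour $i$ exactly when $c(e) \in O_i$. By the definition of $N$, $G'$ contains, for some $i$, a complete subgraph $H'$ on $n_i$ vertices all of whose edges have colour $i$. Equivalently, the corresponding subgraph $H$ of $G$ on vertex set $S$, $|S| = n_i$, is an $m$-edge-coloured complete graph using only colours from $O_i$.

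Next, regard $H$ as an $|O_i|$-edge-coloured complete graph on the colour set $O_i$. The restriction $\Gamma|_{O_i}$ is the image of $\Gamma$ under restriction to the invariant set $O_i$, and it acts on $O_i$. Since $|S| = n_i = R_{\Gamma|_{O_i}}(a_{c_{1,i}},\dots,a_{c_{|O_i|,i}})$, there is a switching sequence $\Pi' = (\sigma_1,w_1),\dots,(\sigma_k,w_k)$ with $\sigma_j \in \Gamma|_{O_i}$ and $w_j \in S$ such that $H^{\Pi'}$ contains a copy of $K_{a_c}^{(c)}$ for some $c \in O_i$. The numbers $R_{\Gamma|_{O_i}}$ are defined here after the obvious relabelling of $O_i$ as $[|O_i|]$, and this relabelling will be made explicit.

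Then lift $\Pi'$ to a switching sequence on $G$. For each $j$, choose $\pi_j \in \Gamma$ whose restriction to $O_i$ is $\sigma_j$, and let $\Pi = (\pi_1,w_1),\dots,(\pi_k,w_k)$ act on all of $G$. The key check is that $G^\Pi$ restricted to $S$ equals $H^{\Pi'}$. Switching at $w_j \in S$ changes only edges incident with $w_j$. An edge inside $S$ always carries a colour in $O_i$, because $O_i$ is $\Gamma$-invariant. Such an edge is therefore permuted by $\pi_j$ exactly as by $\sigma_j$. This follows by induction on $j$. Edges leaving $S$ may change, but this is irrelevant. Hence $G^\Pi$ contains $K_{a_c}^{(c)}$, so $G$ is $\Gamma$-switch equivalent to a graph with the required monochromatic clique, and $R_\Gamma(a_1,\dots,a_m) \le N$.

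The only delicate step is this lifting. One must make sure that $\Gamma|_{O_i}$ is interpreted as the set of restrictions of elements of $\Gamma$, so that every $\sigma_j$ lifts. One also needs orbit invariance so that colours within $S$ never leave $O_i$ during the switching. Both points follow directly from the definitions.
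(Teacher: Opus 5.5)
Your proof is correct and follows the same route as the paper: recolour each edge by the $\Gamma$-orbit containing its colour, use $R(n_1,\dots,n_t)$ to find a clique $H$ on $n_i$ vertices whose colours all lie in $O_i$, and then apply the definition of $R_{\Gamma|_{O_i}}$ to $H$. Your explicit lifting of each $\sigma_j \in \Gamma|_{O_i}$ to some $\pi_j \in \Gamma$ fills in a step the paper only asserts when it says $H$ is $\Gamma$-switch equivalent to a graph containing the required clique. The $\Gamma$-invariance of $O_i$, which keeps every edge inside $S$ coloured from $O_i$ throughout the switching, is what makes that lifting work.
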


\begin{proof}
	Let $n = R(n_1,n_2,\dots,n_t)$ and let $G$ be a complete $m$-edge-coloured graph on $n$ vertices.
	Since $n = R(n_1,n_2,\dots,n_t)$, there exists $1 \leq i \leq t$ such that $G$ contains a complete graph $H$ on $n_i$ vertices in which all of the edge colours are contained in $O_i$.
	The subgraph $H$ is a complete graph on $n_i$ vertices where each edge colour is drawn from  $\{a_{1,i}, a_{2,i},\dots, a_{|O_i|,i}\}$.
	Since $n_i = R_{\Gamma|_{O_i}}(a_{1,i}, a_{2,i},\dots, a_{|O_i|,i})$, $H$ is $\Gamma$-switch equivalent to a complete $m$-edge-coloured graph that contains, for some $1 \leq j \leq |O_i|$, a monochromatic complete graph on $a_{j,i}$ vertices of colour $c_{j,i}$.
	Therefore $R_\Gamma(a_1,a_2,\dots, a_m) \leq n = R(n_1,n_2,\dots,n_t)$.
\end{proof}

\subsection{Abelian Groups}

When $\Gamma$ is abelian the order of the switches in a switching sequence $\Pi$ does not matter.
And so we may assume that all of the switches at vertex $u_1$ are done before those at $u_2$ and so on.
Further, since:
\[
G^{(u_1,\pi)(u_1, \pi^\prime)} = G^{(u_1, \pi\circ\pi^\prime)}.
\]
we may assume there will be a single switch at each vertex.
Therefore every switching sequence has a natural canonical form:
\[
\Pi = (u_1,\pi_1),(u_2,\pi_2),\dots,(u_n,\pi_n).
\]
The set of switching sequences forms a group isomorphic to $\Gamma^n$, which acts on $\mathcal{K}_n^m$. 
The orbits of this group action are exactly the equivalence classes of the relation $\sim$.
In \cite{B25} and \cite{CT2004} and the authors consider the group structure where $\Gamma$ is not abelian.

When $\Gamma$ is abelian, we  associate a corresponding $m$-edge-coloured graph $P(G)$ that encodes the orbit containing $G$.
In \cite{S14}, and elsewhere, this construction is described for $2$-edge-coloured graphs and switching with respect to $S_2$.
This approach is extended to $m$-edge-coloured graphs and cyclic groups and then to arbitrary abelian groups \(\Gamma \leq S_m\) respectively in \cite{B09} and \cite{L21}.
Below we describe this construction when $G$ is a complete $m$-edge-coloured graph.

Let $G$ be a $m$-edge-coloured complete  graph.
Let $V(G) = \{v_1,v_2,\dots, v_n\}$.
We form $P(G)$ from $G$ by taking $|\Gamma|$ copies of $G$, indexed with elements of $\Gamma = \{\gamma_1=e,\gamma_2,\dots,\gamma_{|\Gamma|}\}$.
For $1 \leq i \leq |\Gamma|$, let $V(G_{\gamma_i}) = \{v_{1,\gamma_i},v_{2,\gamma_i}, \dots, v_{n,\gamma_i} \}$.
We complete the construction of $P(G)$ by adding edges between copies of $G$ such that for all $1 \leq k < \ell \leq n$ and all $1 \leq i < j \leq |\Gamma|$, if $c(v_k v_\ell) = t$, then  $v_{k,\gamma_i}v_{\ell,\gamma_j} \in P(G)$ and $c(v_{k,\gamma_i}v_{\ell,\gamma_j}) = \gamma_i \circ \gamma_j(t)$.
The colour of edge $v_{k,\gamma_i}v_{\ell,\gamma_j}$ is the colour that would result in $\Gamma$-switching in $G$ with $\gamma_i$ at  $v_k$ and then with $\gamma_j$ at $v_\ell$.
Corresponding vertices in copies of $G$ in $P(G)$ are not adjacent.
And so $P(G)$ is not a complete $m$-edge-coloured graph.

The structure of the graph $P(G)$ for arbitrary $G$ is studied in \cite{B09}.
The following result follows similarly to Theorem 8 therein. 
\begin{theorem}\label{thm:P(G)}
	Let $\Gamma \leq S_m$ be an abelian group.
	Let $G$ be a complete $m$-edge-coloured graph on $n$ vertices.
    Let $n^\prime \leq n$.
    The $m$-edge-coloured graph $P(G)$ contains a monochromatic complete graph on $n^\prime$ vertices as a subgraph if and only if there is an element of $[G]$ that has a monochromatic complete graph on $n^\prime$ vertices as a subgraph.
\end{theorem}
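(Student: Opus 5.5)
The plan is to show that monochromatic cliques in $P(G)$ and monochromatic cliques in members of $[G]$ correspond to each other directly, using the canonical form of switching sequences for abelian $\Gamma$ described above. The first step is to record the basic colour formula. Since $\Gamma$ is abelian, every element of $[G]$ is $G^\Pi$ for some $\Pi=(u_1,\pi_1),\dots,(u_n,\pi_n)$ with a single switch at each vertex. In $G^\Pi$ the edge $v_kv_\ell$ has colour $\pi_k\circ\pi_\ell(c(v_kv_\ell))$. This colour depends only on the two switches at the endpoints, and not on their order, because $\Gamma$ is abelian. Consequently the colouring that $G^\Pi$ induces on a vertex subset $S$ is determined entirely by $\{\pi_k : v_k\in S\}$.

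I will read the construction of $P(G)$ as the sentence after it intends: the edge $v_{k,\gamma}v_{\ell,\delta}$, for $k\neq\ell$, is coloured $\gamma\circ\delta(c(v_kv_\ell))$, which is the colour produced by switching with $\gamma$ at $v_k$ and with $\delta$ at $v_\ell$. The one point to check against the literal wording is edges inside a single copy $G_\gamma$. For these the formula gives $\gamma\circ\gamma(t)$, so I would either adopt this convention explicitly or restrict attention to the case where this agrees with the copy of $G$. This bookkeeping, making sure that every edge of $P(G)$ between non-corresponding vertices has exactly the switched colour, is where I expect the only real care to be needed. The rest is formal.

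For the forward direction, suppose $P(G)$ contains a monochromatic $K_{n'}$ on vertices $w_1,\dots,w_{n'}$. Corresponding vertices in different copies are non-adjacent, so the $w_s$ are $v_{k_s,\gamma_s}$ with distinct indices $k_1,\dots,k_{n'}$; this is also why $n'\le n$ is needed. I then define $\Pi$ by switching with $\gamma_s$ at $v_{k_s}$ for each $s$, and with the identity at every other vertex. By the colour formula, the edge $v_{k_s}v_{k_r}$ in $G^\Pi$ has colour $\gamma_s\circ\gamma_r(c(v_{k_s}v_{k_r}))$, which is the colour of $w_sw_r$ in $P(G)$. Hence $\{v_{k_1},\dots,v_{k_{n'}}\}$ spans a monochromatic $K_{n'}$ in $G^\Pi\in[G]$.

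For the converse, take $H\in[G]$ containing a monochromatic $K_{n'}$ on a set $S$, and write $H=G^\Pi$ in canonical form with switches $\pi_1,\dots,\pi_n$. The vertices $v_{k,\pi_k}$ for $v_k\in S$ lie at distinct positions, so they are pairwise adjacent in $P(G)$. By the colour formula, each edge among them has colour $\pi_k\circ\pi_\ell(c(v_kv_\ell))$, which equals the colour of $v_kv_\ell$ in $H$. So these vertices span a monochromatic $K_{n'}$ in $P(G)$, completing the equivalence.
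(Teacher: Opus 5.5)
Your argument is correct and is the direct correspondence (switch with $\gamma$ at $v_k$ exactly when the clique uses $v_{k,\gamma}$) that the paper does not write out; it only says the result follows as in Theorem~8 of Brewster and Graves. Of your two options for edges inside a copy, you must take the first (colour $\gamma\circ\gamma(t)$ in $G_\gamma$), because with literal copies of $G$ the statement is false. For example, take $\Gamma=C_3$, $\gamma=(123)$, and $G$ on four vertices with $v_1v_2$, $v_3v_4$ of colour $1$ and the other four edges of colour $3$: the vertices $v_{1,e},v_{2,e},v_{3,\gamma},v_{4,\gamma}$ span a monochromatic $K_4$ in the literal $P(G)$, whereas $c(v_1v_2)-c(v_2v_3)+c(v_3v_4)-c(v_4v_1)$ modulo $3$ is switching-invariant and equals $2$, so no element of $[G]$ is a monochromatic $K_4$.
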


By definition, a $\Gamma$-switch Ramsey number is bounded above by the corresponding classical Ramsey number.
When $\Gamma$ abelian and its action on $[m]$ is semi-regular we can improve this bound by a factor of $|\Gamma|$ using $P(G)$.

\begin{theorem}\label{thm:pushGraph}
	If $\Gamma \leq S_m$ is abelian and the action of $\Gamma$ on $[m]$ is semi-regular, then 
	    \[R_\Gamma(a_1,a_2,\dots, a_m) \leq \left\lceil \frac{1}{|\Gamma|} R(a_1,a_2,\dots, a_m)\right\rceil.\]
\end{theorem}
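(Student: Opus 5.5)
Let $N = R(a_1,\dots,a_m)$ and $n = \lceil N/|\Gamma| \rceil$, so that $n|\Gamma| \geq N$. Let $G$ be any complete $m$-edge-coloured graph on $n$ vertices. The plan is to show that $P(G)$ contains a monochromatic $K_{a_i}^{(i)}$ for some $i$, and then to pull this back to an element of $[G]$ using Theorem \ref{thm:P(G)}. The difficulty is that $P(G)$ is not complete: the corresponding vertices $v_{k,\gamma_i}$ and $v_{k,\gamma_j}$ are non-adjacent. So Ramsey's theorem cannot be applied to $P(G)$ directly. I would first complete $P(G)$ to a complete $m$-edge-coloured graph $\widehat{P}$ on $n|\Gamma| \geq N$ vertices by choosing colours for these missing edges. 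Then $\widehat{P}$ contains some $K_{a_i}^{(i)}$, and I need this clique not to use any of the added edges.

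The key step is choosing colours for the missing edges so that no monochromatic clique can use one. Semi-regularity is what makes this possible. Consider two vertices $v_{k,\gamma}$ and $v_{k,\delta}$ with $\gamma \neq \delta$, and suppose both lie in a monochromatic clique of colour $c$ that also contains some third vertex $v_{\ell,\beta}$. Then $c = \gamma\beta(t) = \delta\beta(t)$, where $t = c(v_kv_\ell)$. This gives $\gamma(s) = \delta(s)$ with $s = \beta(t)$, so $\delta^{-1}\gamma$ fixes $s$. By semi-regularity $\delta^{-1}\gamma = e$, a contradiction. (Here I use that $\Gamma$ is abelian, so the composition order in the edge-colour rule is immaterial.) Hence any monochromatic clique on at least $3$ vertices contains at most one copy of each original vertex, whatever colours the missing edges are given. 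For cliques on exactly $2$ vertices, since every $a_i \geq 2$ and any edge is a $K_2$, I would handle $a_i = 2$ separately. If some $a_i = 2$ and colour $i$ is in the orbit of some edge colour of $G$, a single switch realises $K_2^{(i)}$ directly. Otherwise the added edges can be coloured with a colour from an orbit not realised, or the trivial bound applies; I expect this degenerate case to be routine, and one can also simply assume $a_i \geq 3$.

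Given this, the monochromatic clique in $\widehat{P}$ uses only genuine edges of $P(G)$. So $P(G)$ contains a monochromatic complete graph on $a_i$ vertices of colour $i$ whose vertices come from distinct original vertices. We also need $a_i \leq n$, which holds because such a clique uses distinct $k$'s. By Theorem \ref{thm:P(G)}, some element of $[G]$ contains $K_{a_i}^{(i)}$; more precisely, switching at each $v_k$ in the clique with the group element indexing its copy produces it. Since $G$ was arbitrary, $R_\Gamma(a_1,\dots,a_m) \leq n$.

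The main obstacle is the step excluding the added edges from the clique, which is exactly where semi-regularity and commutativity enter. I would double-check that the colour convention $\gamma_i\circ\gamma_j(t)$ for edges within the same copy (the case $i = j$, where the colour is $\gamma_i^2(t)$) is consistent with the switching interpretation. I would also check that the argument is unaffected when the third vertex lies in the same copy as one of the two.
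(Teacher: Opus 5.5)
Your route is the paper's: complete $P(G)$ to a complete graph on $n|\Gamma|\ge R(a_1,\dots,a_m)$ vertices, apply Ramsey's theorem, use semi-regularity to keep the clique off the added edges, and pull back with Theorem~\ref{thm:P(G)}. Your computation is correct, and cleaner than the paper's case split: $\gamma\beta(t)=\delta\beta(t)$ forces $\delta^{-1}\gamma$ to fix $\beta(t)$. The gap is the conclusion you draw from it, that this holds ``whatever colours the missing edges are given''. That is false.

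The computation needs a third vertex $v_{\ell,\beta}$ with $\ell\neq k$. Otherwise $t=c(v_kv_\ell)$ is undefined, and both edges to the third vertex are themselves added edges. So a monochromatic clique can lie entirely inside one fibre $\{v_{k,\gamma}:\gamma\in\Gamma\}$. For example, take $\Gamma=C_4$ acting regularly on $[4]$, all $a_i=3$, and give every added edge colour $1$. Then each fibre is a $K_4^{(1)}$. The clique Ramsey's theorem produces may be a $K_3^{(1)}$ inside a fibre, which is not a subgraph of $P(G)$, so Theorem~\ref{thm:P(G)} gives you nothing.

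The missing step is the one the paper supplies: colour each fibre so that it has no monochromatic triangle. This is possible because semi-regularity makes every orbit have size $|\Gamma|$, so $|\Gamma|\le m$. For instance, colour $v_{k,\gamma_a}v_{k,\gamma_b}$ ($a<b$) with colour $a$; each colour class inside a fibre is then a star. Now suppose every $a_i\ge 3$ and a monochromatic $K_{a_i}$ meets some fibre twice. Either it has a vertex outside that fibre, which your argument excludes, or it lies inside the fibre, which the colouring excludes.

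Your case $a_i=2$ is not routine and cannot be ``worked around'': the statement is false there in general. For example, $R_{S_2}(2,2)=2$ while $\lceil R(2,2)/2\rceil=1$. So $a_i\ge 3$ must be assumed. The paper assumes it implicitly when it uses that $H$ has at least three vertices.
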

\begin{proof}

	Let $G$ be a $m$-edge-coloured complete graph on $n=\lceil \frac{1}{|\Gamma|} R(a_1,a_2,\dots a_m)\rceil$ vertices.
	
	By construction, $P(G)$ has at least $R(a_1,a_2,\dots, a_m)$ vertices.
	We construct $P^\star(G)$  from $P(G)$ by adding an edge between corresponding vertices in the $|\Gamma|$ copies of $G$ used to construct $P(G)$.
	Since $R(3,3,\dots,3) > m \geq |\Gamma|$, this can be done in such a way that the subgraph induced by corresponding vertices of $G$ contains no monochromatic complete graph.
	That is for all $1 \leq k \leq m$, $G[\{v_{k,\gamma_1}, v_{k,\gamma_2},\dots, v_{k,\gamma_m}\}]$ contains no monochromatic complete graph.
    Thus any monochromatic complete graph in $P^\star(G)$ contains at most one edge whose ends correspond to the same vertex in $G$.
    We prove such a monochromatic complete graph does not exist.
    
	Since $P^\star(G)$ has at least $R(a_1,a_2,\dots, a_m)$ vertices, there exists $1 \leq t \leq m$ such that $P^\star(G)$ contains a monochromatic complete graph, $H$, of colour $t$ and order $a_t$.
	If $H$ contains no edge of the form $v_{k,\gamma_i}v_{k,\gamma_j}$, then $H$ is a subgraph of $P(G)$.
	And so by Theorem \ref{thm:P(G)}, $G$ can be $\Gamma$-switched to obtain a copy of $H$ and the result follows.
	
	Otherwise, assume $H$ contains an edge of the form $v_{k,\gamma_i}v_{k,\gamma_j}$.
	Without loss of generality, assume the following:  $H$ contains the edge $v_{1,\gamma_1}v_{1,\gamma_2}$ and $c_{P^\star(G)}(v_{1,\gamma_1}v_{1,\gamma_2}) = 1$.
	Let $x = v_{1,\gamma_1}$ and $y = v_{1,\gamma_2}$.
	Since $H$ has at least three vertices, then it contains a vertex other than $x$ and $y$.
	Let $w \neq x,y$ be a vertex of $H$.
	By construction there exists $2 \leq \ell \leq n$ and $1 \leq j \leq |\Gamma|$ such that  $w= v_{\ell,\gamma_j}$.
	Since $H$ is monochromatic, $c(xy)= c(xw) = c(yw)$.
	
	Without loss of generality assume $\ell = 2$ and $j \in \{1,3\}$.
	See Figure \ref{fig:P(G)proof} for corresponding configurations for $j =1,3$.
	The case $j = 1$ is symmetric to that of $j = 2$.

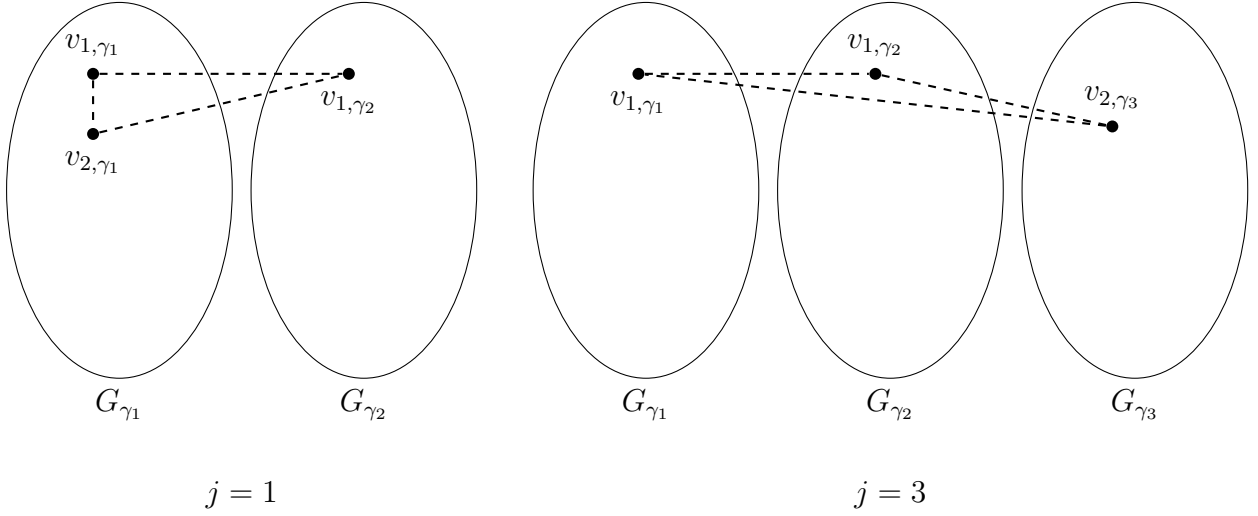
\begin{figure}
    \centering
\begin{tikzpicture}[vtx/.style={circle, fill=black, inner sep=1.6pt}]

\begin{scope}[shift={(0,0)}]
  \draw (0,0) ellipse [x radius=1.5, y radius=2.5];
  \draw (3.25,0) ellipse [x radius=1.5, y radius=2.5];

  \node[vtx, label = $v_{1,\gamma_1}$] (a1) at (-0.35, 1.55) {};
  \node[vtx, label = below:$v_{2,\gamma_1}$] (a2) at (-0.35, 0.75) {};
  \node[vtx, label = below:$v_{1,\gamma_2}$] (b1) at (3.05, 1.55) {};

  \draw[thick,dashed] (a1) -- (a2);
  \draw[thick, dashed] (a1) -- (b1);
  \draw[thick, dashed] (a2) -- (b1);

  \node at (0,-2.85) {$G_{\gamma_1}$};
  \node at (3.25,-2.85) {$G_{\gamma_2}$};

  \node at (1.625,-4.05) {$j=1$};
\end{scope}

\begin{scope}[shift={(7.0,0)}]
  \draw (0,0) ellipse [x radius=1.5, y radius=2.5];
  \draw (3.25,0) ellipse [x radius=1.5, y radius=2.5];
  \draw (6.50,0) ellipse [x radius=1.5, y radius=2.5];

  \node[vtx, label = below:$v_{1,\gamma_1}$] (c1) at (-0.10, 1.55) {};
  \node[vtx, label = $v_{1,\gamma_2}$] (c2) at (3.05, 1.55) {};
  \node[vtx, label = $v_{2,\gamma_3}$]  (c3) at (6.20, 0.85) {};
  
  \draw[thick, dashed] (c1) -- (c2);
  \draw[thick, dashed] (c2) -- (c3);
  \draw[thick, dashed] (c3) -- (c1);

  \node at (0,-2.85) {$G_{\gamma_1}$};
  \node at (3.25,-2.85) {$G_{\gamma_2}$};
  \node at (6.50,-2.85) {$G_{\gamma_3}$};

  \node at (3.25,-4.05) {$j=3$};
\end{scope}

\end{tikzpicture}  
    \caption{Configurations for $j = 1,3$ in the proof of Theorem \ref{thm:P(G)}}
    \label{fig:P(G)proof}
\end{figure}

	Consider first the case $j = 1$.
	In this case, $w$ is contained in the same copy of $G$ has $u$.
	Therefore $c_G(xw) = c_{P(G)}(xw) =  c_{P(G)}(yw)$.
	And so
	\[
	1 = c_G(xw) = c_{P(G)}(xw) = e\circ \gamma_2(1) = \gamma_2(1)
	\]
	Therefore $\gamma_2$ fixes $1$, a contradiction as no non-trivial element of $\Gamma$ fixes an element of $[m]$ since $\Gamma$ is semi-regular.
	
	Consider now the case $j = 3$.
	In this case,
	\[
	1 = c_{P(G)}(xw) = c_{P(G)}(yw)
	\]
	Therefore
	\[
	t = \gamma_3(1) = \gamma_2\circ\gamma_3(1),
	\]
	which again implies the existence of a non-trivial element of $\Gamma$ that fixes an element of $\{1,2,\dots, m\}$, or implies $\gamma_2 = \gamma_3=e$.
    The former contradicts the semi-regularity of $\Gamma$.
    The latter contradicts $j = 3$.
	
	Therefore $H$ contains no edge of the form $v_{k,\gamma_i}v_{k,\gamma_j}$.
    By Theorem \ref{thm:P(G)}, $G$ can be $\Gamma$-switched to contain, for some $1 \leq i \leq m$ a complete monochromatic subgraph of colour $i$ and order $a_i$.
    Therefore $R(a_1,a_2,\dots, a_m) \leq \left\lceil \frac{1}{|\Gamma|} R_\Gamma(a_1,a_2,\dots a_m)\right\rceil$.
\end{proof}

Using $P(G)$ we also obtain the following.

\begin{theorem}
	If $\Gamma$ is abelian, then $R(a_1, a_2 \dots, a_m) \leq R_\Gamma(|\Gamma|a_1,|\Gamma|a_2,\dots, |\Gamma|a_m)$.
\end{theorem}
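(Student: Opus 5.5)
The plan is a pigeonhole argument on the switches, using the canonical form of switching sequences available because $\Gamma$ is abelian. Let $n = R_\Gamma(|\Gamma|a_1,|\Gamma|a_2,\dots,|\Gamma|a_m)$ and let $G$ be an arbitrary $m$-edge-coloured complete graph on $n$ vertices. The goal is to show that $G$ itself contains some $K_{a_j}^{(j)}$; this gives $R(a_1,\dots,a_m) \leq n$.

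By the choice of $n$, there is a switching sequence $\Pi$ such that $G^\Pi$ contains a copy $H$ of $K_{|\Gamma|a_i}^{(i)}$ for some $i$. Since $\Gamma$ is abelian, $\Pi$ may be taken in the canonical form $(u_1,\pi_1),\dots,(u_n,\pi_n)$, with exactly one switch per vertex. The edge $u_ku_\ell$ then has colour $\pi_k\pi_\ell(c_G(u_ku_\ell))$ in $G^\Pi$. Assign to each vertex of $H$ its label $\pi_k \in \Gamma$. There are $|\Gamma|a_i$ vertices and $|\Gamma|$ labels, so some $\pi \in \Gamma$ labels a set $S$ of at least $a_i$ vertices of $H$. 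For $u_k,u_\ell \in S$ we get $\pi^2(c_G(u_ku_\ell)) = i$, so $c_G(u_ku_\ell) = \pi^{-2}(i)$. Thus $G[S]$ is a monochromatic $K_{a_i}$ in colour $j = \pi^{-2}(i)$. Equivalently, by Theorem~\ref{thm:P(G)}, this is a monochromatic clique of $P(G)$ lying entirely inside one copy $G_\pi$.

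The main obstacle is this final colour bookkeeping. We obtain a clique of order $a_i$ in colour $\pi^{-2}(i)$, whereas we need order $a_{\pi^{-2}(i)}$ in that colour. When $\pi^2 = e$, or when $a_i$ is constant on the $\Gamma$-orbit of $i$ (for example when all the $a_t$ are equal), this is immediate, and the proof closes by taking $j = i$ or by the orbit-constancy.

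For general parameters, I would first try to strengthen the pigeonhole so that it forces $\pi^2$ to fix $i$. One option is to use that $\Gamma$ is abelian to replace every label $\pi_k$ by $\pi_k\sigma$ for a fixed $\sigma$. This changes the switched graph only by a global switch, and I would check whether such a change can re-centre the popular class at the identity. The other option is to pigeonhole on ordered pairs of labels, exploiting that the clique has order $|\Gamma|a_i$ rather than $a_i$. If neither works, the argument as planned proves the statement under the hypothesis that $a_t$ is constant on $\Gamma$-orbits. I would state that hypothesis explicitly, or replace $a_i$ by $\max_{t \in \Gamma i} a_t$ in the left-hand parameters.
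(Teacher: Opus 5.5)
You run the same pigeonhole as the paper. The paper phrases it inside $P(G)$: a monochromatic clique of order $|\Gamma|a_i$ and colour $i$ has $a_i$ vertices in one copy $G_{\gamma_j}$, and the paper concludes ``since $G_{\gamma_j}$ is a copy of $G$'' that $G$ contains $K_{a_i}^{(i)}$.

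The obstacle you isolated is genuine, and it is exactly what that last step passes over. For Theorem~\ref{thm:P(G)} to hold with colours, an edge $v_{k,\gamma}v_{\ell,\gamma}$ inside the copy indexed by $\gamma$ must carry colour $\gamma^2(c_G(v_kv_\ell))$. So the clique found there has colour $\gamma_j^{-2}(i)$ in $G$, just as in your computation. If the copies instead keep the colours of $G$, as the paper's description literally says, the first step breaks instead. The colour-preserving form of Theorem~\ref{thm:P(G)} used there fails as soon as two vertices are switched by the same $\gamma$ with $\gamma^2(t)\neq t$.

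Neither of your proposed repairs can close this, because the statement is false as written. Take $\Gamma=C_3\le S_3$ and $(a_1,a_2,a_3)=(2,a,a)$. Every $3$-edge-colouring of $K_n$ with $n=R(6,6,6)$ contains a monochromatic $K_6$ in some colour $c$. Switching each of its six vertices by $(123)^k$ with $2k\equiv 1-c \pmod 3$ turns it into $K_6^{(1)}$. Hence $R_{C_3}(6,3a,3a)\le R(6,6,6)$ for every $a$, whereas $R(2,a,a)=R(a,a)>2^{a/2}$ is unbounded.

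Your first repair is circular in any case. Replacing each $\pi_k$ by $\pi_k\sigma$ recolours $H$ to $\sigma^2(i)$ and moves the popular label to $\pi\sigma$. Then $(\pi\sigma)^{-2}\sigma^2(i)=\pi^{-2}(i)$ again, as it must, since the colour of $G[S]$ depends on $G$ alone.

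What your argument does prove is the correct restricted statement. The inequality holds whenever $a_t$ is constant on the orbits of the subgroup $\{\gamma^2:\gamma\in\Gamma\}$. In particular it holds when all $a_t$ are equal, or when every element of $\Gamma$ has order at most $2$; so Corollary~\ref{cor:doubleBound}, with $\Gamma=S_2$, is unaffected.

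For arbitrary parameters the pigeonhole gives either of two valid versions:
\begin{itemize}
\item $R(b_1,\dots,b_m)\le R_\Gamma(|\Gamma|a_1,\dots,|\Gamma|a_m)$ with $b_j=\min_{t\in\Gamma j}a_t$;
\item $R(a_1,\dots,a_m)\le R_\Gamma(|\Gamma|c_1,\dots,|\Gamma|c_m)$ with $c_i=\max_{t\in\Gamma i}a_t$.
\end{itemize}
Your final sentence has the direction reversed. Putting $\max_{t\in\Gamma i}a_t$ into the left-hand parameters makes the inequality stronger, and the pigeonhole does not deliver it.
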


\begin{proof}
	Let $n = R_\Gamma(|\Gamma|a_1, |\Gamma|a_2, \dots, |\Gamma|a_m)$ and let $G$ be a complete $m$-edge-coloured graph on $n$ vertices.
	By Theorem \ref{thm:P(G)}, for some $1 \leq i \leq m$, $P(G)$ contains a complete monochromatic subgraph $H$ of order $|\Gamma|a_i$ and colour $i$.
	By the pigeonhole principle, there exists $1 \leq j \leq |\Gamma|$ such that $G_{\gamma_j}$ contains at least $a_i$ of the vertices of $H$.
    Since $G_{\gamma_j}$ is a copy of $G$, $G$ contains a monochromatic complete graph of colour $i$ and order $a_i$.
	Therefore $R(a_1, a_2, \dots, a_m) \leq n$.	
\end{proof}

\begin{corollary}\label{cor:doubleBound}
For all $a,b \geq 2$,
        $$R\left(a, b\right) \leq R_{S_2}(2a,2b) \leq \left\lceil\frac{1}{2}R(2a,2b)\right\rceil.$$
\end{corollary}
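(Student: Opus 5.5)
The corollary follows by specialising the two preceding theorems to $m=2$ and $\Gamma = S_2$. So the plan is to check the hypotheses of each theorem for this group and then chain the two inequalities.

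\textbf{Lower bound.} First I will note that $S_2$ is abelian, with $|S_2| = 2$. Applying the preceding theorem (if $\Gamma$ is abelian then $R(a_1,\dots,a_m) \leq R_\Gamma(|\Gamma|a_1,\dots,|\Gamma|a_m)$) with $m=2$, $a_1 = a$ and $a_2 = b$ gives $R(a,b) \leq R_{S_2}(2a,2b)$ directly.

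\textbf{Upper bound.} Here I will invoke Theorem~\ref{thm:pushGraph}, which needs $S_2$ to be abelian and its action on $[2]$ to be semi-regular. Semi-regularity holds because the only non-identity element is the transposition $(1\,2)$, which fixes neither colour, so every point stabiliser is trivial. Applying Theorem~\ref{thm:pushGraph} to the pair $(2a,2b)$ then gives
\[
R_{S_2}(2a,2b) \leq \left\lceil \tfrac{1}{2} R(2a,2b) \right\rceil.
\]
Theorem~\ref{thm:pushGraph} applies to all $a_i \geq 2$. Its proof uses $a_t \geq 3$ only in the step that picks a third vertex of the monochromatic clique $H$, and here $2a, 2b \geq 4$, so the argument goes through unchanged. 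The same proof also needs $R(3,3) > m = 2$, which is immediate since $R(3,3)=6$.

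\textbf{Conclusion and main obstacle.} Concatenating the two inequalities gives the corollary. The only step needing any care is confirming the semi-regularity hypothesis and that the order conditions inside the proof of Theorem~\ref{thm:pushGraph} hold for parameters $2a, 2b \geq 4$; both are immediate. The rest is bookkeeping.
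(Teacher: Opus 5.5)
Your proposal is correct and follows the paper's route exactly: the corollary is just the unlabelled abelian theorem $R(a_1,\dots,a_m)\le R_\Gamma(|\Gamma|a_1,\dots,|\Gamma|a_m)$ together with Theorem~\ref{thm:pushGraph}, specialised to $\Gamma=S_2$, which is abelian of order $2$ and acts regularly on $[2]$. One small quibble: your aside that Theorem~\ref{thm:pushGraph} holds for all $a_i\ge 2$ is not right in general (at $(2,2)$ it would give $R_{S_2}(2,2)\le 1$, whereas $R_{S_2}(2,2)=2$), but, as you note, the proof only uses $a_t\ge 3$, and here $2a,2b\ge 4$, so the corollary is unaffected.
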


\subsection{Commutator Subgroups}
When $\Gamma$ is sufficiently rich we can, through a sequence of switches, change the colour of a single edge of a complete $m$-edge-coloured graph without affecting the colours on any other edges of the graph.
Let $G$ be a complete $m$-edge-coloured graph and let $\Gamma$ be a  subgroup of $S_m$.
Let $\alpha,\beta \in \Gamma$.
Consider the commutator $\gamma = \beta^{-1}\alpha^{-1}\beta\alpha$.
If $\gamma$ is not the identity, then there exists $i \neq j$ such that $\gamma(i) = j$.
Assume $e = uv$ has colour $i$ in $G$, and consider the following $\Gamma$-switching sequence: $(u,\alpha),(v,\beta), (u,\alpha^{-1}), (v,\beta^{-1})$. 
Under this sequence, edge $e$ has changed from colour $i$ to colour $j$ and no other edge has changed its colour.

From this observation, we see that the action of $[\Gamma,\Gamma]$ on $\{1,2,3,\dots, m\}$ partitions the set of colours into orbits such that if $i$ and $j$ are in the same orbit, then there is a sequence of $\Gamma$-switches that change a single edge in $G$ from $i$ to $j$, leaving all other edges unchanged. 
And so, if a complete $m$-edge-coloured graph contains a subgraph of order $t$ where the colour of all the edges are drawn from the same orbit under the action of $[\Gamma,\Gamma]$, then $H$ can be $\Gamma$-switched to be monochromatic of any colour within this orbit.

\begin{lemma}\label{lem:commSubgroup}
Let $H$ be a complete $m$-edge-coloured graph. 
Let $\Gamma$ be a subgroup of $S_m$.
If the colours of all of the edges of $H$ are drawn from the same orbit $O$ induced by  the action of $[\Gamma,\Gamma]$ on $[m]$, then for every $j \in O$, $H$ is $\Gamma$-switch equivalent to a monochromatic complete graph of colour $j$. 
\end{lemma}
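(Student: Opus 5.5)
The proof is a local recolouring argument: I will show that the colour of any single edge can be moved along its $[\Gamma,\Gamma]$-orbit without disturbing any other edge, and then recolour the edges of $H$ one at a time. The key tool is the commutator switching sequence from the preceding discussion. For $\alpha,\beta\in\Gamma$ and an edge $uv$, consider the sequence $(u,\alpha),(v,\beta),(u,\alpha^{-1}),(v,\beta^{-1})$. I will verify the following carefully, keeping track of the composition convention.
\begin{enumerate}[(i)]
\item Every edge incident with $u$ but not $v$ is hit by $\alpha$ and then $\alpha^{-1}$, so it returns to its original colour.
\item Symmetrically, every edge incident with $v$ but not $u$ is hit by $\beta$ and then $\beta^{-1}$, so it is unchanged.
\item Edges meeting neither $u$ nor $v$ are untouched.
\item The edge $uv$ itself is sent from colour $i$ to $\beta^{-1}\alpha^{-1}\beta\alpha(i)$ (or the reverse-order product, depending on convention). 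Either way the result is $\gamma(i)$ for a commutator $\gamma=[\alpha,\beta]$ or its inverse.
\end{enumerate}
Since the inverse of a commutator is again a commutator, this shows that for every commutator $\gamma$ there is a switching sequence changing only $uv$, taking its colour $i$ to $\gamma(i)$.

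Next I extend this from single commutators to all of $[\Gamma,\Gamma]$. Every element $\delta\in[\Gamma,\Gamma]$ is a product $\gamma_1\gamma_2\cdots\gamma_r$ of commutators. Concatenating the corresponding sequences, applying the one for $\gamma_r$ first and so on, changes the colour of $uv$ from $i$ to $\delta(i)$ and leaves every other edge fixed. Consequently, if $i$ and $j$ lie in the same orbit $O$ of $[\Gamma,\Gamma]$, some $\delta$ with $\delta(i)=j$ exists, and the edge $uv$ can be recoloured from $i$ to $j$ in isolation.

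Finally, fix $j\in O$ and list the edges $e_1,\dots,e_k$ of $H$. For each $e_s$, whose current colour $c(e_s)$ lies in $O$, choose $\delta_s\in[\Gamma,\Gamma]$ with $\delta_s(c(e_s))=j$ and apply the isolated-edge sequence for $e_s$. Each step recolours only $e_s$, so edges already coloured $j$ stay coloured $j$. The concatenation of all these sequences is a switching sequence $\Pi$ with $H^\Pi$ monochromatic of colour $j$.

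I expect the main obstacle to be the bookkeeping in the first step: getting the order of composition right on the edge $uv$, and confirming that the edges at $u$ and at $v$ really see $\alpha$ cancelled by $\alpha^{-1}$ even though switches at the other endpoint are interleaved. This works because each such edge is affected by switches at only one of $u,v$. Beyond that, the only point needing care is that products of commutators realise the whole of $[\Gamma,\Gamma]$, which holds by definition of the commutator subgroup.
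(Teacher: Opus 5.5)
Your proposal is correct and follows essentially the same route as the paper. Both use the commutator switching sequence $(u,\alpha),(v,\beta),(u,\alpha^{-1}),(v,\beta^{-1})$ to recolour a single edge in isolation, concatenate such sequences to realise any element of $[\Gamma,\Gamma]$, and then recolour the edges of $H$ one at a time; your explicit checks of the composition order and of the edges at $u$ and $v$ are details the paper leaves implicit.
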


\begin{proof}
Let $\Gamma \leq S_m$.
Let $\mathcal{O}$ be the set of orbits induced by the action of $[\Gamma,\Gamma]$ on $[m]$.
Let $O \in \mathcal{O}$.
Let $H$ be a complete $m$-edge-coloured graph such that all of the edge colours are contained in $O$.
Let $j \in O$.
Let $e =uv$ be an edge of $H$ of colour $k \neq j$.

Since $j,k \in O$, there exists a product of commutators $\gamma = \Pi_{i=1}^{\ell} \gamma_i = \Pi_{i=1}^{\ell}\beta_i^{-1}\alpha_i^{-1}\beta_i\alpha_i$ such that $\gamma(k) = j$.
Applying the $\Gamma$-switch sequence:
\[(u,\alpha_{\ell}),(v,\beta_\ell), (u,\alpha_\ell^{-1}), (v,\beta_\ell^{-1}),
(u,\alpha_{\ell-1}),(v,\beta_{\ell-1}), (u,\alpha_{\ell-1}^{-1}), (v,\beta_{\ell-1}^{-1})\dots\] 
\[\dots (u,\alpha_{2}),(v,\beta_2), (u,\alpha_2^{-1}), (v,\beta_2^{-1})(u,\alpha_{1}),(v,\beta_1), (u,\alpha_1^{-1}), (v,\beta_1^{-1})\]
changes the colour of $uv$ to $j$ while leaving all other edge colours unchanged.
 
Therefore $H$ can be $\Gamma$-switched to be monochromatic of colour $j$.
\end{proof}

\begin{theorem}\label{thm:commClassic}
    Let $G$ be a complete $m$-edge-coloured graph.
    Let $\Gamma$ be a subgroup of $S_m$.
    Let $\mathcal{O} = \{O_1,O_2,\dots, O_t\}$ be the orbits induced by the action of $[\Gamma,\Gamma]$ on $[m]$.
    For $n_i = \min\{a_j\mid  j \in O_i\}$,
\[
R_\Gamma(a_1,a_2,\dots,a_m) \leq R(n_1,n_2,\dots, n_t).
\]
\end{theorem}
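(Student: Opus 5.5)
We argue directly from the definition, in the same way as the proof of Theorem \ref{thm:inTrans}, except that Lemma \ref{lem:commSubgroup} now plays the role that the definition of $R_{\Gamma|_{O_i}}$ played there. Set $n = R(n_1,n_2,\dots,n_t)$ and let $G$ be any complete $m$-edge-coloured graph on $n$ vertices. We must show that $G$ is $\Gamma$-switch equivalent to a graph containing $K_{a_j}^{(j)}$ for some $j \in [m]$.

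First, we coarsen the colouring of $G$. The orbits $O_1,\dots,O_t$ of $[\Gamma,\Gamma]$ partition $[m]$. Define a $t$-edge-colouring $G'$ on the same vertex set by giving edge $e$ colour $i$ exactly when $c_G(e) \in O_i$. Since $G'$ has $R(n_1,\dots,n_t)$ vertices, the classical Ramsey property gives some $i$ and some set $S$ of $n_i$ vertices on which $G'$ is monochromatic of colour $i$. Equivalently, in $G$ every edge of the complete subgraph $H = G[S]$ has its colour in $O_i$.

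Next, we choose the target colour. Pick $j \in O_i$ with $a_j = n_i$, which is possible because $n_i$ is a minimum over the finite set $O_i$. By Lemma \ref{lem:commSubgroup}, $H$ can be $\Gamma$-switched to be monochromatic of colour $j$, so it becomes a copy of $K_{a_j}^{(j)}$.

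The one point needing care is that Lemma \ref{lem:commSubgroup} is stated for $H$ on its own, while we need the switches to act inside $G$. The proof of the lemma uses only commutator switching sequences at the two ends $u,v$ of a single edge. By the observation preceding the lemma, each such sequence changes the colour of $uv$ alone and leaves every other edge of the ambient graph unchanged, including edges from $u$ or $v$ to vertices outside $S$. So applying the same sequences in $G$ recolours exactly the chosen edges of $H$, and the rest of $G$ is unaffected. The resulting graph is in $[G]$ and contains $K_{a_j}^{(j)}$. Since $G$ was arbitrary, $R_\Gamma(a_1,\dots,a_m) \le n$.

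I do not expect a real obstacle here. Once the locality of commutator switches is noted, the argument is just the classical Ramsey property on the orbit colouring followed by Lemma \ref{lem:commSubgroup}.
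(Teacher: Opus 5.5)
Your proposal is correct and follows the paper's proof: you coarsen the colouring by the orbits of $[\Gamma,\Gamma]$, apply the classical Ramsey property to get a subgraph $H$ of order $n_i$ whose edge colours all lie in $O_i$, and then make $H$ monochromatic using Lemma \ref{lem:commSubgroup}. Your two additions are details the paper leaves implicit: choosing $j\in O_i$ with $a_j=n_i$, and checking that each commutator switching sequence recolours only the edge $uv$ of the ambient graph $G$.
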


\begin{proof}
Let $n = R(n_1,n_2,\dots, n_t)$.
Consider $G$, a complete $m$-edge-coloured graph on $n$ vertices.
Let $\mathcal{O} = \{O_1,O_2,\dots, O_t\}$ be the orbits induced by the action of $[\Gamma,\Gamma]$ on $[m]$.
By relabelling each edge of colour $i$ with the orbit containing $i$, we arrive at a $t$-edge-coloured graph.
By choice of $n$, $G$ contains, for some $1 \leq i \leq t$, a complete monochromatic graph $H$ of order $n_i = \min\{a_j\mid  j \in O_i\}$.
The result follows from Lemma \ref{lem:commSubgroup}, as we may $\Gamma$-switch so that each edge in $H$ has the same colour.
\end{proof}

\begin{corollary}\label{cor:Commtrans}
    Let $G$ be a complete $m$-edge-coloured graph.
    Let $\Gamma$ be a subgroup of $S_m$.
    If $[\Gamma,\Gamma]$ acts transitively on $[m]$, then 
    \[
        R_\Gamma(a_1,a_2,\dots,a_m) = \min\{a_i\mid  1 \leq i \leq m\}.
    \]
\end{corollary}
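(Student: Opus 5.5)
The plan is to prove the two inequalities separately. The upper bound will come from Theorem \ref{thm:commClassic}, and the lower bound will come from a direct construction.

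For the upper bound, suppose $[\Gamma,\Gamma]$ acts transitively on $[m]$. Then the action of $[\Gamma,\Gamma]$ has exactly one orbit, $O_1 = [m]$, so $t=1$ and $n_1 = \min\{a_j \mid j\in[m]\}$. Theorem \ref{thm:commClassic} then gives $R_\Gamma(a_1,\dots,a_m) \leq R(n_1)$. The one-colour classical Ramsey number $R(n_1)$ is just $n_1$, since any complete graph on $n_1$ vertices with all edges in a single class is trivially monochromatic in that class. To be more concrete: take any $m$-edge-coloured $K_n$ with $n = \min_i a_i = a_{i_0}$. All of its edge colours lie in the single orbit $[m]$, so Lemma \ref{lem:commSubgroup} lets us $\Gamma$-switch the whole graph to be monochromatic of colour $i_0$. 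The result contains $K_{a_{i_0}}^{(i_0)}$.

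For the lower bound, I will exhibit a graph on $\min_i a_i - 1$ vertices that is not equivalent to anything containing a forbidden clique. Any graph in its equivalence class also has only $\min_i a_i - 1$ vertices. So for every $i$ it cannot contain a complete subgraph on $a_i \geq \min_j a_j$ vertices at all, let alone a monochromatic one. Hence $R_\Gamma > \min_i a_i - 1$. One degenerate point needs checking: since all $a_i\geq 2$, the graph has at least one vertex, so it is a valid (possibly edgeless) complete graph. Combining the two bounds gives equality.

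I do not expect any real obstacle. The only care needed is in reading Theorem \ref{thm:commClassic} with $t=1$ correctly, and in noting that the classical one-colour Ramsey number equals its argument.
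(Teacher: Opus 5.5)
Your proposal is correct and follows the paper's route: the paper states this corollary without a separate proof, as the single-orbit case $t=1$ of Theorem \ref{thm:commClassic} (equivalently, Lemma \ref{lem:commSubgroup} applied to any graph on $\min_i a_i$ vertices, using $R(n_1)=n_1$). The paper leaves the lower bound implicit, and you supply the trivial argument for it. That argument works verbatim for every $n<\min_i a_i$, not just $n=\min_i a_i-1$, which is what minimality in the definition requires.
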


For $\Gamma = A_m$, $[\Gamma,\Gamma]$ acts transitively on $[m]$.

\begin{theorem}
    Let $G$ be a complete $m$-edge-coloured graph.
    Then
    \[R_{A_m}(a_1,a_2,\dots,a_m) = \min\{a_i\mid  1 \leq i \leq m\}.\]
\end{theorem}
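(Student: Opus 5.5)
The plan is to deduce the statement directly from Corollary \ref{cor:Commtrans}, so the only real work is to verify that $[A_m,A_m]$ acts transitively on $[m]$. I will split according to the value of $m$. For $m \geq 5$, $A_m$ is simple and non-abelian, hence perfect: $[A_m,A_m]$ is a nontrivial normal subgroup, so $[A_m,A_m] = A_m$. Since $A_m$ contains every $3$-cycle, it is transitive on $[m]$ for $m \geq 3$. For $m = 4$, the commutator subgroup is the Klein four-group
\[
V_4 = \{e,(12)(34),(13)(24),(14)(23)\}.
\]
This group is regular, and in particular transitive, on $[4]$. It can be exhibited explicitly as a commutator: taking $\alpha = (123)$ and $\beta = (124)$ gives a nontrivial double transposition, and conjugating by $3$-cycles yields the remaining two.

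With transitivity established, Corollary \ref{cor:Commtrans} gives $R_{A_m}(a_1,\dots,a_m) = \min\{a_i\}$ immediately. For completeness I would also recall why that corollary holds, since it is stated without proof. The upper bound comes from Theorem \ref{thm:commClassic} with a single orbit ($t=1$), where $R(n_1) = n_1 = \min_i a_i$. Equivalently, on $n = \min_i a_i$ vertices every edge colour lies in the unique orbit, and Lemma \ref{lem:commSubgroup} switches the whole graph to be monochromatic of the colour $j$ attaining the minimum. The lower bound is trivial: a graph on fewer than $\min_i a_i$ vertices cannot contain any $K_{a_i}^{(i)}$, whatever switching is applied.

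The main obstacle is the small cases, and I expect the statement to need the hypothesis $m \geq 4$. For $m = 3$, $A_3 \cong C_3$ is abelian, so $[A_3,A_3]$ is trivial. Its orbits are then singletons, and Corollary \ref{cor:Commtrans} does not apply. Indeed the paper later claims $R_{C_3}(4,4,4) = R(3,3,3)+1 > 4$, which contradicts the formula. Likewise, for $m \leq 2$ the group $A_m$ is trivial and $R_{A_m}$ is just the classical Ramsey number. I would therefore state and prove the result for $m \geq 4$, and remark explicitly that the cases $m \leq 3$ are excluded.
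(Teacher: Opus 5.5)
Your proposal is correct and follows the paper's own argument: the paper simply asserts that $[A_m,A_m]$ acts transitively on $[m]$ and invokes Corollary~\ref{cor:Commtrans}, and your case check ($A_m$ perfect for $m\ge 5$, $[A_4,A_4]=V_4$ regular on $[4]$) supplies the justification that the paper omits. Your caveat is also correct, and it is a defect of the stated theorem rather than of your argument: for $m\in\{2,3\}$ the group $A_m$ is abelian, so the paper's transitivity claim fails and so does the formula. You can even show $R_{A_3}(4,4,4)>4$ without the computer search, since $c(u_1u_2)-c(u_2u_3)+c(u_3u_4)-c(u_4u_1)$ is a $C_3$-switching invariant mod $3$; hence a $K_4$ with one edge of colour $1$ and all other edges of colour $3$ can never be switched to a monochromatic $K_4$.
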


Recall the quotient group $\Gamma/[\Gamma,\Gamma]$ is abelian and acts on the set of orbits induced by the action of $[\Gamma,\Gamma]$ on $[m]$.

\begin{lemma}\label{lem:Quotient}
 Let $G$ be a complete $m$-edge-coloured graph.
    Let $\Gamma$ be a subgroup of $S_m$.
    Let $\mathcal{O} = \{O_1,O_2,\dots, O_t\}$ be the orbits induced by the action of $[\Gamma,\Gamma]$ on $[m]$.
    For $n_i = \min\{a_j \mid j \in O_i\}$,
\[R_{\Gamma}(a_1,a_2,\dots,a_m) \leq R_{\Gamma/[\Gamma,\Gamma]}(n_1,n_2,\dots,n_t).\]
\end{lemma}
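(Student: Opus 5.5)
Let $N = R_{\Gamma/[\Gamma,\Gamma]}(n_1,n_2,\dots,n_t)$ and let $G$ be a complete $m$-edge-coloured graph on $N$ vertices. The plan is to pass to a quotient colouring, switch there, lift the switches back to $\Gamma$, and finish with Lemma \ref{lem:commSubgroup}, mirroring the proof of Theorem \ref{thm:commClassic}.

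\emph{Step 1 (quotient colouring).} Form the $t$-edge-coloured graph $\overline{G}$ by recolouring each edge of colour $i$ with the index of the $[\Gamma,\Gamma]$-orbit containing $i$. Since $[\Gamma,\Gamma]$ is normal in $\Gamma$, each $\pi\in\Gamma$ maps $[\Gamma,\Gamma]$-orbits to $[\Gamma,\Gamma]$-orbits: $\pi([\Gamma,\Gamma]x) = [\Gamma,\Gamma]\pi x$. Moreover, this action depends only on the coset $\pi[\Gamma,\Gamma]$. Thus $\Gamma/[\Gamma,\Gamma]$ acts on $\mathcal{O}$, and we regard it as a subgroup of $S_t$ via this action. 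If the action is not faithful, we use its image in $S_t$; since switching only sees the induced permutation of $\mathcal{O}$, this changes nothing.

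\emph{Step 2 (lifting switches).} By the choice of $N$, there is a switching sequence $\overline{\Pi} = (\bar\pi_1,v_1),\dots,(\bar\pi_k,v_k)$ with $\bar\pi_r\in\Gamma/[\Gamma,\Gamma]$ such that $\overline{G}^{\overline{\Pi}}$ contains a complete subgraph $\overline{H}$ on $n_i$ vertices, all of whose edges have colour $O_i$. Choose coset representatives $\pi_r\in\Gamma$ of the $\bar\pi_r$, and set $\Pi=(\pi_1,v_1),\dots,(\pi_k,v_k)$. The key verification is that quotienting commutes with switching:
\[
\overline{G^{(\pi,v)}} = \overline{G}^{(\bar\pi,v)}.
\]
This holds edge by edge, because the orbit of $\pi(c)$ is $\bar\pi$ applied to the orbit of $c$. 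By induction on $k$ we get $\overline{G^{\Pi}} = \overline{G}^{\overline{\Pi}}$. Hence in $G^\Pi$ the vertex set of $\overline{H}$ spans a complete graph $H$ on $n_i$ vertices all of whose edge colours lie in $O_i$.

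\emph{Step 3 (finishing).} Choose $j\in O_i$ with $a_j = n_i$. By Lemma \ref{lem:commSubgroup}, $H$ can be $\Gamma$-switched to be monochromatic of colour $j$. The switches used there change only the colours of edges of $H$: each commutator block changes the colour of a single edge and leaves every other edge unchanged. So we can apply the same switches to $G^\Pi$ and leave everything outside $H$ untouched; in any case, only $H$ matters. The result is a graph $\Gamma$-switch equivalent to $G$ that contains $K_{a_j}^{(j)}$, so $R_\Gamma(a_1,\dots,a_m)\le N$.

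The main obstacle is Step 1/2: making precise that $\Gamma/[\Gamma,\Gamma]$ is a well-defined permutation group on $\mathcal{O}$, so that $R_{\Gamma/[\Gamma,\Gamma]}$ makes sense in the paper's framework. We also need the commuting-with-switching identity to hold regardless of the choice of coset representatives. Both reduce to the normality of $[\Gamma,\Gamma]$, and that is where I expect to spend the care.
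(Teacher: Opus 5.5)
Your proposal is correct and follows the same route as the paper's proof. You relabel edges by $[\Gamma,\Gamma]$-orbit, realise the $\Gamma/[\Gamma,\Gamma]$-switching with coset representatives in $\Gamma$, and then apply Lemma \ref{lem:commSubgroup}. Your normality argument for well-definedness and your check that quotienting commutes with switching fill in details the paper leaves implicit.
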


\begin{proof}
Let $n = R_{\Gamma/[\Gamma,\Gamma]}(n_1,n_2,\dots,n_t)$.
And consider, $G$, a complete $m$-edge-coloured graph on $n$ vertices.
Let $\mathcal{O} = \{O_1,O_2,\dots, O_t\}$ be the orbits induced by the action of $[\Gamma,\Gamma]$ on $[m]$.

By relabelling each edge with the orbit containing the colour of the edge, we arrive at a $t$-edge-coloured graph.
By choice of $n$, $G$ contains, for some $1 \leq i \leq t$, a subgraph $H$ which can be $\Gamma/[\Gamma,\Gamma]$-switched to obtain a complete graph $H$ where the orbit labels are all $O_i$.
By definition this switching can be done using a representative elements of $\Gamma$.
By choice of $n$, $G$ contains, for some $1 \leq i \leq t$, $H$, a subgraph $H$ which can be $\Gamma$-switched to obtain a complete graph $H$ where the orbit labels are all $O_i$.
The result follows from Lemma \ref{lem:commSubgroup}.
    \end{proof}

For $\Gamma = D_m$, $[\Gamma,\Gamma]$ has either a single orbit or two orbits, depending on the parity of $m$.
Combining the results of Corollary \ref{cor:Commtrans} and Lemma \ref{lem:Quotient} yields the following result.
    
\begin{theorem}
    Let $G$ be a complete $m$-edge-coloured graph and let $\Gamma = D_m$.
    If $m$ is odd, then 
    \[R_{\Gamma}(a_1,a_2,\dots,a_m) = \min\{a_i\mid  1 \leq i \leq m\}.\]
    Otherwise, if $m$ is even, then
    \[R_{\Gamma}(a_1,a_2,\dots,a_m) \leq R_{S_2}(n_1,n_2)\]
    where $n_1 = \min\{a_1, a_3, \dots, a_{m-1}\}$ and $n_2 = \min\{a_2,a_4,\dots, a_{m}\}$.
\end{theorem}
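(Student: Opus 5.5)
The plan is to treat $D_m$ as the symmetry group of a regular $m$-gon acting on $[m]$, generated by the rotation $\rho\colon i \mapsto i+1 \pmod m$ and a reflection $\sigma\colon i \mapsto -i \pmod m$. First compute the commutator subgroup. Using $\sigma\rho\sigma^{-1} = \rho^{-1}$, the commutator $[\rho,\sigma]$ equals $\rho^{\pm 2}$, and every commutator lies in $\langle \rho\rangle$. Since the quotient by $\langle\rho^2\rangle$ is abelian (of order $2$ or $4$), we get $[D_m,D_m] = \langle \rho^2\rangle$.

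Next determine the orbits of $\langle\rho^2\rangle$ on $[m]$.
\begin{itemize}
\item If $m$ is odd, $\rho^2$ generates the same cyclic group as $\rho$, so it acts transitively on $[m]$. Corollary \ref{cor:Commtrans} then gives $R_{D_m}(a_1,\dots,a_m) = \min\{a_i\}$ directly.
\item If $m$ is even, the orbits are the odd colours $O_1=\{1,3,\dots,m-1\}$ and the even colours $O_2=\{2,4,\dots,m\}$.
\end{itemize}

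For even $m$, apply Lemma \ref{lem:Quotient} with $t=2$, $n_1=\min\{a_1,a_3,\dots,a_{m-1}\}$ and $n_2=\min\{a_2,\dots,a_m\}$. This gives $R_{D_m}(a_1,\dots,a_m)\le R_{\Gamma/[\Gamma,\Gamma]}(n_1,n_2)$. It remains to identify the action of $\Gamma/[\Gamma,\Gamma]$ on the two orbits $\{O_1,O_2\}$ with the $S_2$-action. The rotation $\rho$ sends odd colours to even colours, so it swaps $O_1$ and $O_2$. The reflection $\sigma$ preserves parity, so it fixes both orbits. Hence the induced permutation group on $\{O_1,O_2\}$ is all of $S_2$.

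The main obstacle is that $\Gamma/[\Gamma,\Gamma]$ has order $4$ when $m$ is even, but acts on $\{O_1,O_2\}$ only through its image $S_2$. So I must argue that switching with the quotient reduces to switching with the image group. This holds because the switching equivalence relation on orbit-labelled graphs depends only on the set of permutations of the labels that the group induces: elements with the same image give identical switches. Therefore $R_{\Gamma/[\Gamma,\Gamma]}(n_1,n_2)=R_{S_2}(n_1,n_2)$, which completes the bound.
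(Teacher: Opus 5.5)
Your proposal is correct and follows the paper's own route: identify $[D_m,D_m]=\langle\rho^2\rangle$, apply Corollary \ref{cor:Commtrans} when $m$ is odd (one orbit), and apply Lemma \ref{lem:Quotient} when $m$ is even (odd and even colours as the two orbits). You also usefully make explicit two points the paper's one-line justification leaves implicit: the computation of the commutator subgroup and its orbits, and the fact that the order-$4$ quotient acts on the two parity classes only through its image $S_2$, so switching with it coincides with $S_2$-switching.
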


By applying Theorem \ref{thm:inTrans} to $R_{\Gamma/[\Gamma,\Gamma]}(m_1,m_2,\dots,m_k)$ we obtain the following:
\begin{theorem}\label{thm:comIntrans}
Let $G$ be a complete $m$-edge-coloured graph.
Let $\Gamma$ be a subgroup of $S_m$.
Let $\mathcal{O} = \{O_1,O_2,\dots, O_k\}$ be the orbits induced by the action of $[\Gamma,\Gamma]$ on $[m]$.
Let $\mathcal{P} = \{P_1,P_2,\dots, P_q\}$ be the orbits induced by the action of $\Gamma/[\Gamma,\Gamma]$ on $\mathcal{O}$.
For $n_i = \min\{a_j \mid j \in O_i\}$ and
$m_j = R_{\Gamma/[\Gamma,\Gamma]}(n_{c_{1,j}}, n_{c_{2,j}},\dots n_{c_{|P_j|,j}})$,
$$R_{\Gamma}(a_1,a_2,\dots,a_t) \leq R(m_1,m_2,\dots,m_q).$$
\end{theorem}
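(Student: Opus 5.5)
The plan is to chain Lemma \ref{lem:Quotient} with Theorem \ref{thm:inTrans}, applied to the quotient group $\Gamma/[\Gamma,\Gamma]$ acting on the set of colours $\mathcal{O}$.

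First, by Lemma \ref{lem:Quotient}, with $n_i = \min\{a_j \mid j \in O_i\}$,
\[
R_{\Gamma}(a_1,a_2,\dots,a_m) \leq R_{\Gamma/[\Gamma,\Gamma]}(n_1,n_2,\dots,n_k).
\]
Here we view $\Gamma/[\Gamma,\Gamma]$ as a group of permutations of the $k$ ``colours'' $O_1,\dots,O_k$. The action is well defined because $[\Gamma,\Gamma]$ is normal in $\Gamma$, so each $\gamma\in\Gamma$ maps $[\Gamma,\Gamma]$-orbits to $[\Gamma,\Gamma]$-orbits, and elements of $[\Gamma,\Gamma]$ fix every orbit. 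If this action is not faithful, we replace the quotient by its image in $S_k$; this does not change the switching equivalence classes.

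Next, apply Theorem \ref{thm:inTrans} to this permutation group acting on $[k]\cong\mathcal{O}$. Its orbits are exactly $P_1,\dots,P_q$. Write $P_j=\{O_{c_{1,j}},\dots,O_{c_{|P_j|,j}}\}$. Theorem \ref{thm:inTrans} then gives
\[
R_{\Gamma/[\Gamma,\Gamma]}(n_1,\dots,n_k) \leq R(m_1',\dots,m_q'),
\]
where $m_j' = R_{(\Gamma/[\Gamma,\Gamma])|_{P_j}}(n_{c_{1,j}},\dots,n_{c_{|P_j|,j}})$. This is the quantity $m_j$ of the statement, with the restriction of the group to $P_j$ understood, exactly as in Theorem \ref{thm:inTrans}.

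Finally, monotonicity of the classical Ramsey number yields the claimed bound $R_\Gamma(a_1,\dots,a_m)\leq R(m_1,\dots,m_q)$. Alternatively, we can read $m_j$ literally as a number for the full quotient group, restricted to colours in $P_j$; since the group preserves $P_j$, the two readings agree.

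The main obstacle is justifying that Theorem \ref{thm:inTrans} and Lemma \ref{lem:Quotient} apply to a quotient group rather than a genuine subgroup of $S_k$. The fix is to pass to the induced permutation group on $\mathcal{O}$. Switching by a coset $\gamma[\Gamma,\Gamma]$ on orbit labels is realised by switching with any representative $\gamma\in\Gamma$, as already used in the proof of Lemma \ref{lem:Quotient}. Hence equivalence under the induced group on the relabelled $k$-edge-coloured graph is exactly what is needed, and Theorem \ref{thm:inTrans} applies verbatim to it.
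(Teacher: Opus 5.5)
Your proposal is correct and takes essentially the same route as the paper, which obtains this theorem by combining Lemma \ref{lem:Quotient} with Theorem \ref{thm:inTrans} applied to $\Gamma/[\Gamma,\Gamma]$ acting on $\mathcal{O}$. Your remarks on passing to the image of the quotient in $S_k$ when the action is not faithful, and on reading $m_j$ as the Ramsey number for the quotient restricted to $P_j$, make explicit details the paper leaves implicit; since $m_j'=m_j$, the final step is just chaining the two inequalities, so no appeal to monotonicity is needed.
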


\section{Groups Acting Transitively on the Set of Colours}\label{sec:trans}

Let $G$ be a complete graph and let $v \in V(G)$. 
We say $G$ is \emph{homogenised at $v$} when all of the edges incident with $v$ have the same colour.
We use the notation $G_v$ to refer to a graph that is homogenised at $v$.
When $G$ is homogenised at $v$, then switching at $v$ will preserve this property.
When $\Gamma$ acts transitively on $[m]$ there is always a switching sequence where the switch at $v$ is trivial and the resulting graph is homogenised at $v$.
Subsequently we may switch at $v$ so that $G$ is homogenised at $v$ with respect to any colour.
We use $G_v^i$ to refer to the homogenisation at $v$ where all of the edges incident with $v$ have colour $i$.
Using these ideas we prove the following bound, which generalises a corresponding approach in \cite{M24} for $m = 2$.

\begin{theorem}\label{thm:plusOne}
	If $\Gamma \leq S_m$ acts transitively on the set of colours, then
	\[R_\Gamma(a_1, a_2,\dots, a_m) \leq R(a_1-1,a_2-1,\dots, a_m-1)+1.\]
\end{theorem}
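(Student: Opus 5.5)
We will take $n = R(a_1-1,a_2-1,\dots,a_m-1)+1$ and an arbitrary complete $m$-edge-coloured graph $G$ on $n$ vertices. The plan is to use the homogenisation idea described just before the statement to spend one vertex $v$ as a universal apex. Then we apply the classical Ramsey number to the remaining $n-1$ vertices, which are unaffected by the switches performed at the other end of each edge incident with $v$.

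The first step is to homogenise $G$ at a fixed vertex $v$ without touching any edge among the other vertices. For each $w \neq v$, the edge $vw$ has some colour $c(vw)$. By transitivity of $\Gamma$ on $[m]$ there is $\pi_w \in \Gamma$ with $\pi_w(c(vw)) = 1$. Switching at $w$ with $\pi_w$ changes the colours of all edges at $w$, including edges $ww'$ with $w' \neq v$. This is where care is needed, since the switches at different $w$ interact on the edges of $G - v$. However, the switched graph $G' = G^{(\pi_{w_1},w_1),\dots,(\pi_{w_{n-1}},w_{n-1})}$ is simply some complete $m$-edge-coloured graph in which every edge at $v$ has colour $1$; we never need to know what $G'-v$ looks like. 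So we do not try to preserve $G - v$. We only record that $G'$ is homogenised at $v$ with colour $1$, and that switching at $v$ with any $\sigma \in \Gamma$ changes exactly the colours of edges at $v$, and changes them uniformly to $\sigma(1)$. By transitivity, for each colour $i$ we can obtain $(G')^i_v$, in which all edges at $v$ have colour $i$ and $G'-v$ is unchanged.

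Next, $G' - v$ is a complete $m$-edge-coloured graph on $R(a_1-1,\dots,a_m-1)$ vertices. By the definition of the classical Ramsey number, it contains for some $i$ a copy $K$ of $K_{a_i-1}^{(i)}$. We then switch at $v$ with an element $\sigma \in \Gamma$ satisfying $\sigma(1) = i$, which exists by transitivity. This leaves $K$ untouched, since $v \notin V(K)$, and makes every edge from $v$ to $K$ colour $i$. Hence $V(K)\cup\{v\}$ induces a $K_{a_i}^{(i)}$ in a graph $\Gamma$-switch equivalent to $G$, giving the bound.

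The only delicate point is the one flagged above: the naive approach of first finding the monochromatic clique in $G - v$ and then trying to fix the edges to $v$ fails, because switching at vertices of $K$ alters $K$. The resolution is to homogenise before applying Ramsey's theorem, and to apply Ramsey's theorem to the modified graph $G'-v$. The edge case where some $a_i - 1 = 1$ is handled by the convention $R(\dots,1,\dots)=1$, and the same argument goes through.
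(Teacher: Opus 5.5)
Your proof is correct and follows essentially the same route as the paper: homogenise at $v$ by switching at every other vertex, apply the classical Ramsey number to the switched graph $G'-v$, and then switch at $v$ so that its edges take the colour of the clique you found. Your remark that Ramsey's theorem must be applied to $G'-v$ rather than to the original $G-v$ is exactly what the paper's phrase ``we may assume $G$ is homogenised at $v$'' implicitly means.
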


\begin{proof}
	Let $\Gamma \leq S_m$ act transitively on the set of colours.
	Let $n= R(a_1-1,a_2-1,\dots a_m-1)+1$ and consider an arbitrary complete $m$-edge-coloured graph $G$ on $n$ vertices.
	Since $\Gamma$ acts transitively, we may assume $G$ is homogenised at a vertex $v \in V(G)$; we may switch at each vertex other than $v$ so that all edges incident with $v$ have the same colour.
	The subgraph induced by $V(G) \setminus \{v\}$ has $R(a_1-1,a_2-1,\dots, a_m-1)$ vertices.
	And so, for some $1 \leq i \leq m$, $G-v$ contains a monochromatic complete graph on $a_i-1$ vertices of colour $i$.
	Since $\Gamma$ acts transitively on the set of colours we may switch at $v$ so that all of the edges incident with $v$ have colour $i$, yielding a monochomatic complete graph on $a_i$ vertices of colour $i$.
\end{proof}

In the proof of Theorem \ref{thm:plusOne}, we find a monochromatic complete subgraph containing a fixed vertex $v$ by first homogensing at $v$ and then by finding (without switching) a monochromatic complete subgraph in $G_v-v$.
This approach will be useful in our study of $C_m$-Ramsey numbers. 

 \begin{theorem}\label{thm:smallerN}
     Let $m,n, n^\prime \geq 2$ with $n^\prime \leq n$.
     Let $\Gamma \leq S_m$ act transitively on the set of colours.
     Let $G \in \mathcal{K}_n^m$ and $v \in V(G)$.
     There exists $H \in [G]$ in which $v$ is contained in a monochromatic $K_{n^\prime}$ if and only if $G^{(1)}_v-v$ contains a monochromatic $K_{n^\prime-1}$.
 \end{theorem}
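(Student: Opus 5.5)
The plan is to prove the two directions separately. Throughout, the key fact is that switching at $v$ changes only edges incident with $v$, so the subgraph $G^{(1)}_v - v$ is unaffected by switches at $v$. I read $G^{(1)}_v$ as an element of $[G]$ homogenised at $v$ with colour $1$. Such an element exists by transitivity, as described before Theorem \ref{thm:plusOne}: switch at each $u \neq v$ with some $\pi_u$ sending $c(uv)$ to $1$.

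For the ``if'' direction, let $S \subseteq V(G)\setminus\{v\}$ induce a monochromatic $K_{n'-1}$ of colour $i$ in $G^{(1)}_v - v$. Since $\Gamma$ is transitive, choose $\pi \in \Gamma$ with $\pi(1) = i$ and switch at $v$ with $\pi$. This recolours every edge at $v$ from $1$ to $i$ and fixes every edge of $G^{(1)}_v - v$. Then $S \cup \{v\}$ induces a monochromatic $K_{n'}$ of colour $i$ in $H = (G^{(1)}_v)^{(\pi,v)} \in [G]$.

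For the ``only if'' direction, suppose $H \in [G]$ and $S \ni v$ induces a monochromatic $K_{n'}$ of colour $i$ in $H$. First, for each $u \notin S$, switch at $u$ with some $\pi_u$ mapping $c_H(uv)$ to $i$. These switches touch no edge inside $S$, so the result $H'$ is homogenised at $v$ in colour $i$ and $S$ is still monochromatic. Next, switch at $v$ with some $\sigma$ satisfying $\sigma(i) = 1$. This gives $H'' \in [G]$ that is homogenised at $v$ in colour $1$ and in which $S\setminus\{v\}$ is a monochromatic $K_{n'-1}$ in $H''-v$. Taking $G^{(1)}_v = H''$ finishes this direction.

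The main obstacle is well-definedness: this argument needs $G^{(1)}_v$ to be a suitable homogenisation rather than an arbitrary one. If the statement is meant for every homogenisation, I must show that any two elements $H_1, H_2 \in [G]$ homogenised at $v$ in colour $1$ either both or neither contain a monochromatic $K_{n'-1}$ in their restriction to $V(G)\setminus\{v\}$.

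\begin{itemize}
\item In the abelian case I would use the canonical form $(u_1,\pi_1),\dots,(u_n,\pi_n)$. Since both graphs are homogenised in colour $1$, each $\pi_u$ with $u\neq v$ must equal $\pi_v^{-1}$ on the relevant colour. Semi-regularity or the $P(G)$ description of Theorem \ref{thm:P(G)} should then show that $H_1 - v$ and $H_2 - v$ differ only by switching every vertex with a common element, which permutes colours uniformly and so preserves monochromatic cliques.
\item In the non-abelian case this can fail, because commutator sequences (Lemma \ref{lem:commSubgroup}) recolour single edges.
\end{itemize}

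So I would first try the existential reading, which is exactly what the argument above proves. I would fall back on the abelian computation only if the universal reading is required.
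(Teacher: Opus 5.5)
Your proof is correct. Your ``if'' direction matches the paper's.

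The ``only if'' direction uses the same idea as the paper: re-choose the switches at vertices outside the clique so that $v$ becomes homogenised. The implementation differs. The paper writes the sequence from $G$ to $H$ in the one-switch-per-vertex canonical form and replaces the identity switches at $u_{n'},\dots,u_{n-1}$. That canonical form is only justified when $\Gamma$ is abelian, which the theorem does not assume. You instead post-compose further switches on $H$, which needs no rearranging of sequences, so your argument holds for every transitive $\Gamma$. Your closing switch at $v$ with $\sigma(i)=1$ also removes the need for the paper's assumption that the clique has colour $1$. That assumption is not always attainable: for $C_m$ with $m$ even, parity prevents turning a monochromatic triangle of even colour into colour $1$.

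Your well-definedness concern is a real point the paper never addresses, and it matters. Theorems \ref{thm:3434} and \ref{thm:R418} apply this result to one specific, pre-chosen homogenised $G$, so they need the universal reading. Your abelian sketch supplies it, and it can be made exact without $P(G)$:
\begin{itemize}
\item A transitive abelian subgroup of $S_m$ is regular, because point stabilisers are conjugate, hence equal, hence trivial.
\item Suppose $H_2=H_1^{\Pi}$ with $\Pi$ in canonical form, and both are homogenised at $v$ in colour $1$. Then $\pi_u\pi_v(1)=1$, so regularity forces $\pi_u=\pi_v^{-1}$ for every $u\neq v$.
\item Hence $H_2-v$ is obtained from $H_1-v$ by applying the single permutation $\pi_v^{-2}$ to every colour, which preserves monochromatic cliques.
\end{itemize}
This covers $C_3$ and $C_4$.

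Your non-abelian caveat is also right. For $\Gamma=S_3$ the commutator subgroup $A_3$ is transitive. The sequence of Lemma \ref{lem:commSubgroup}, applied to an edge $uw$ with $u,w\neq v$, recolours only $uw$. So with $n=n'\geq 4$ it turns a homogenisation in which $G^{(1)}_v-v$ is a monochromatic $K_{n'-1}$ into one in which it is not, while $v$ stays homogenised.
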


\begin{proof}
	Let $m,n, n^\prime \geq 2$ with $n^\prime \leq n$.
	Let $\Gamma \leq S_m$ act transitively on the set of colours.
	Let $G \in \mathcal{K}_n^m$ and $v \in V(G)$.

	Assume there exists $H \in [G]$ in which $v$ is contained in a monochromatic $K_{n^\prime}$.
	Without loss of generality let $v = u_n$, and assume this complete subgraph has edges of colour $1$ and has vertex set $\{u_1,u_2,\dots, u_{n^\prime-1}, u_n\}$.
	Therefore there exists a switching sequence 
	$$\Pi = (u_1,\pi_1)(u_2,\pi_2),\dots,(u_{{n^\prime}-1},\pi_{{n^\prime}-1}),(u_{n^\prime},e),(u_{n^\prime+1},e),\dots, (u_{n-1},e),(u_{n},\pi_n) $$
	 such that $v$ is contained in a monochromatic subgraph of order $n^\prime$ in $G^\Pi$.
	For $n^\prime \leq j \leq n-1$ let $\gamma_i \in \Pi$ such that $\gamma_i(c_G(u_ju_n)) = \phi_1(c_G(u_1u_n))$.
	For $1 \leq i \leq n$ let
	\[
	\phi_i = 
	\begin{cases}
		\pi_i, & i \leq n^\prime-1 \text{ or } i = n\\
		\gamma_i, & n^\prime  \leq i \leq n-1.
	\end{cases}
	\]
	Let $\Phi = (u_1,\phi_1),(u_2,\phi_2),\dots,(u_n,\phi_n)$.
	By construction, $G^\Phi$ is homogenised at $v$ and $$G^\Phi[u_1,u_2,\dots u_{n^{\prime}-1}] \cong K^{(c_{G^\Phi}(u_1u_n))}_{n^\prime-1}.$$
	
	Assume  $G^{(1)}_v-v$ contains a monochromatic $K_{n^\prime-1}$.
	Since $\Gamma$ acts transitively on $[m]$ we may switch at $v$ so that all of the edges incident with $v$ have the same colour as the edges in this monochromatic $K_{n^\prime-1}$.
	Therefore there exists $H \in [G]$ in which $v$ is contained in a monochromatic $K_{n^\prime}$.
\end{proof}

In general we do not expect the bound in Theorem \ref{thm:plusOne} to be tight. 
However, Mutar, Sivaraman and Slilaty prove this bound is tight for $R_{S_2}(3,t)$ and $R_{S_2}(4,4)$ \cite{M24}.
In what follows we prove the bound is tight for $R_{C_3}(4,4,4)$ and $R_{C_4}(3,4,3,4)$.
We also provide evidence that suggests it may be tight for $R_{C_4}(4,4,4,4)$.
This latter result provides a possible new approach to improving the long-standing lower bound on $R(3,3,3,3)$.

\subsection{Cyclic Groups}
As noted in the previous section, When $\Gamma$ is abelian, every switching sequence can be expressed as 
\[
\Pi = (u_1,\pi_1),(u_2,\pi_2),\dots,(u_n,\pi_n).
\]
The set of switching sequences forms a group isomorphic to $\Gamma^n$, which acts on the set of $m$-edge-coloured complete graphs.

When $\Gamma = C_m$, we may express each element of $\Gamma$ as $(123\cdots m)^k$ for some $k \in [m]$.
In this case we can compute the colour of every edge of $G^\Pi$ as follows
\[c_{G^\Pi}(u_iu_j) \equiv c_{G}(u_iu_j) + k_i + k_j \pmod m,
\]
where $\pi_i = (123\cdots m)^{k_i}$ and $\pi_j = (123\cdots m)^{k_j}$.

Combining together these ideas permits us to count the number of orbits of $\mathcal{K}_n^m/C_m^n$, which leads to bounds on the $C_m$-switch  Ramsey number.

\begin{theorem}\label{thm:ker}
    Let $m,n \geq 1$ and $\Gamma = C_m$.
   	If $m$ is odd, then the  kernel of the action of $\Gamma^n$ on $\mathcal{K}_n^m$ is $\{e\}$.
    If $m$ is even, then the kernel of the action of $\Gamma^n$ on $\mathcal{K}_n^m$ is  $\{e, ((12\cdots m)^{m/2},(12\cdots m)^{m/2},\dots, (12\cdots m)^{m/2})\}$.
\end{theorem}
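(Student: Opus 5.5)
We identify an element of $\Gamma^n$ with an exponent vector $(k_1,\dots,k_n)\in\mathbb{Z}_m^n$, where $\pi_i=(12\cdots m)^{k_i}$. By the formula stated above, $c_{G^\Pi}(u_iu_j)\equiv c_G(u_iu_j)+k_i+k_j \pmod m$. Hence $\Pi$ lies in the kernel of the action on $\mathcal{K}_n^m$ exactly when $k_i+k_j\equiv 0 \pmod m$ for every pair $i\neq j$. Acting trivially on every $G$ is the same as acting trivially on a single $G$, since the shift is independent of the colouring. The problem therefore reduces to solving this system of congruences over $\mathbb{Z}_m$.

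We first take $n\geq 3$. For distinct $i,j,\ell$ we have $k_i+k_j\equiv 0$, $k_i+k_\ell\equiv 0$ and $k_j+k_\ell\equiv 0$. Subtracting the first two congruences gives $k_j\equiv k_\ell$, and since $\ell$ was arbitrary all the $k_i$ are equal to a common value $k$. The condition then becomes $2k\equiv 0\pmod m$. If $m$ is odd, $2$ is invertible modulo $m$, so $k=0$ and the kernel is $\{e\}$. If $m$ is even, the solutions are $k\in\{0,m/2\}$, which gives exactly the two listed elements. Conversely, the constant vector with entry $m/2$ adds $m\equiv 0$ to every edge colour, so it does lie in the kernel.

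The main obstacle is the small cases, which the statement's hypothesis $n\geq 1$ includes. For $n=1$ there are no edges, so the whole group $\Gamma^1$ acts trivially. For $n=2$ the kernel is $\{(k,-k)\}$, of size $m$. The statement as written therefore fails for $n\leq 2$. I would handle this by proving the theorem for $n\geq 3$, which is presumably the intended range given the surrounding Ramsey setting where $n\geq 3$, and remarking on the degenerate cases $n=1,2$.
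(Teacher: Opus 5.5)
Your proof follows the paper's route: reduce to $k_i+k_j\equiv 0 \pmod m$ over all edges and solve. You also supply the step the paper skips, using a third index to force all $k_i$ equal, which is exactly where $n\geq 3$ is needed. Your caveat is correct as well. The paper imposes the constraint for all $i,j\in[n]$, which tacitly includes $i=j$, and for $m\geq 3$ the stated kernel is wrong when $n\leq 2$; for $m\in\{1,2\}$ the statement happens to hold for every $n$, so ``fails for $n\leq 2$'' should be qualified by $m\geq 3$.
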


\begin{proof}
    Consider $\Pi \in \Gamma^n$ such that $H^\Pi = H$ for all $H \in \mathcal{K}_n^m$.
    Consider $u_iu_j \in E(H)$.
    Since $\Gamma = C_m$ there exists $k_i,k_j \in [m]$ such that $\pi_i = (12\cdots m)^{k_i}$ and $\pi_j = (12\cdots m)^{k_j}$.
    And so 
    $$c_{H^\Pi}(u_iu_j) \equiv c_{H}(u_iu_j) + k_i + k_j \pmod m.$$
    If $H^\Pi = H$, then for all $i,j \in [n]$,
    \[
     k_i + k_j  \equiv 0 \pmod m.
    \]
    When $m$ is even the only solutions are $k_\ell = 0$ for all $\ell \in [n]$ or $k_\ell = m/2$ for all $\ell \in [n]$.
    When $m$ is odd the only solution is $k_\ell = 0$ for all $\ell \in [n]$. 
\end{proof}

An elementary application of the Orbit Stabiliser Theorem yields the following.

\begin{corollary}\label{cor:countClass}
 Let $m,n \geq 1$ and  $G \in \mathcal{K}_n^m$.
 
\begin{itemize}
 \item If  $m$ is even, then  $|Orb_{C_m^n}(G)|= \frac{m^n}{2}$ and $|\mathcal{K}_n^m/C_m^n| = 2m^{{n \binom{n}{2}} - n}$.
     \item If $m$ is odd, then $|Orb_{C_m^n}(G)| = m^n$ and $|\mathcal{K}_n^m/C_m^n| =m^{\binom{n}{2} - n}$.
\end{itemize}
\end{corollary}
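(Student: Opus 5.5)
The plan is to apply the Orbit Stabiliser Theorem to the action of $C_m^n$ on $\mathcal{K}_n^m$. The key observation is that every point stabiliser equals the kernel computed in Theorem~\ref{thm:ker}. Then all orbits have the same size, and counting orbits reduces to dividing $|\mathcal{K}_n^m| = m^{\binom{n}{2}}$ by the common orbit size.

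\textbf{Step 1: stabilisers.} Fix $G \in \mathcal{K}_n^m$ and let $\Pi = ((12\cdots m)^{k_1},\dots,(12\cdots m)^{k_n}) \in C_m^n$ with $G^\Pi = G$. By the colour formula for cyclic switching, $c_{G^\Pi}(u_iu_j) \equiv c_G(u_iu_j) + k_i + k_j \pmod m$. Hence $G^\Pi = G$ holds if and only if $k_i + k_j \equiv 0 \pmod m$ for all $i \neq j$, and this condition does not depend on $G$. So $\mathrm{Stab}(G)$ is exactly the set of solutions of this system.

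\textbf{Step 2: solving the system.} I would argue as in the proof of Theorem~\ref{thm:ker}, this time for a single fixed $G$. For distinct $i,j,\ell$, subtracting $k_i+k_\ell \equiv 0$ from $k_j + k_\ell \equiv 0$ gives $k_i \equiv k_j$. Then $2k_i \equiv 0 \pmod m$ for all $i$.
\begin{itemize}
\item If $m$ is odd, this forces $k_i = 0$ for all $i$, so $|\mathrm{Stab}(G)| = 1$.
\item If $m$ is even, it forces all $k_i$ equal to $0$ or all equal to $m/2$, so $|\mathrm{Stab}(G)| = 2$.
\end{itemize}

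\textbf{Step 3: orbits.} The Orbit Stabiliser Theorem gives $|Orb_{C_m^n}(G)| = m^n/|\mathrm{Stab}(G)|$. This equals $m^n$ for $m$ odd and $m^n/2$ for $m$ even. Since this size is independent of $G$, every orbit has the same size. Therefore
\[
|\mathcal{K}_n^m/C_m^n| = \frac{m^{\binom{n}{2}}}{m^n/|\mathrm{Stab}|},
\]
which is $m^{\binom{n}{2}-n}$ for $m$ odd and $2m^{\binom{n}{2}-n}$ for $m$ even. The exponent $n\binom{n}{2}-n$ printed in the even case should read $\binom{n}{2}-n$.

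\textbf{Main obstacle: small $n$.} The only delicate point is the hypothesis on $n$. Step 2 needs three distinct indices, and the computations in Theorem~\ref{thm:ker} implicitly need them too.
\begin{itemize}
\item For $n = 2$, the single condition $k_1 + k_2 \equiv 0$ has $m$ solutions, so each orbit has size $m$. For even $m \geq 4$ this differs from $m^n/2 = m^2/2$, and for odd $m$ it differs from $m^2$.
\item For $n = 1$ there are no edges, so each orbit has size $1$.
\end{itemize}
So I would state and prove the corollary for $n \geq 3$, where the argument above goes through verbatim.
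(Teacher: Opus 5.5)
Your argument is correct and follows the paper's route: the paper simply invokes the Orbit Stabiliser Theorem with the kernel from Theorem~\ref{thm:ker}, and you make explicit the fact it needs, namely that each point stabiliser equals that kernel because the condition $k_i+k_j\equiv 0 \pmod m$ does not depend on $G$. Your two corrections are also right: the even-case exponent should be $\binom{n}{2}-n$ (the form the paper itself uses in Theorem~\ref{thm:allThree}), and both this corollary and Theorem~\ref{thm:ker} genuinely need $n\geq 3$ (apart from the degenerate cases $m\le 2$), because solving the system requires a third index.
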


\begin{theorem}\label{thm:allThree}
    For all $m \geq 2$, $R_{C_m}(3,3,\dots, 3) = 3$.
\end{theorem}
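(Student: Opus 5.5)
Since $C_m$ acts transitively on $[m]$, Theorem \ref{thm:plusOne} gives
\[
R_{C_m}(3,3,\dots,3) \leq R(2,2,\dots,2)+1.
\]
Here $R(2,2,\dots,2)=2$, because any single edge is a monochromatic $K_2$. This yields the upper bound $R_{C_m}(3,3,\dots,3)\leq 3$.

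Rather than rely solely on that citation, I would also give the direct argument, which is short. Let $G\in\mathcal{K}_3^m$ have vertices $u_1,u_2,u_3$, and write $\pi_i=(12\cdots m)^{k_i}$. By the formula $c_{G^\Pi}(u_iu_j)\equiv c_G(u_iu_j)+k_i+k_j \pmod m$, it suffices to take $k_3=0$ and then choose $k_1,k_2$ so that both edges at $u_3$ receive the colour of $u_1u_2$ after switching. Concretely, we need
\[
c(u_1u_3)+k_1 \equiv c(u_1u_2)+k_1+k_2 \quad\text{and}\quad c(u_2u_3)+k_2 \equiv c(u_1u_2)+k_1+k_2 \pmod m.
\]
These conditions simplify to $k_2\equiv c(u_1u_3)-c(u_1u_2)$ and $k_1\equiv c(u_2u_3)-c(u_1u_2)$, both of which are always solvable in $\mathbb{Z}_m$. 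So every $3$-vertex coloured triangle is switch equivalent to a monochromatic one; this is homogenising at $u_3$ in the sense of Theorem \ref{thm:smallerN}.

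For the lower bound, note that the definition requires a monochromatic $K_3^{(i)}$ for some $i$, and a graph on $2$ vertices contains no $K_3$. Hence $R_{C_m}(3,\dots,3)>2$, which gives equality. There is no real obstacle here. The only point needing care is that the definition of $R_\Gamma$ forces $n\geq \min a_i=3$.
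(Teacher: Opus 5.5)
Your proof is correct, but it takes a different route from the paper.

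The paper argues by counting orbits. Using Theorem~\ref{thm:ker} and the orbit--stabiliser count (Corollary~\ref{cor:countClass}), it shows that $\mathcal{K}_3^m$ is a single switching class when $m$ is odd and splits into exactly two classes when $m$ is even. For even $m$ it then uses the fact that $C_m$-switching preserves the parity of the colour sum: this places $K_3^{(1)}$ and $K_3^{(2)}$ in different classes, so every class contains a monochromatic triangle.

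You instead observe that the upper bound is just the case $a_i=3$ of Theorem~\ref{thm:plusOne}, since $R(2,\dots,2)=2$. You back this up with an explicit solution of the switching congruences, and you state the trivial lower bound, which the paper leaves implicit.

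Your argument is shorter and needs no case split on the parity of $m$. It also avoids the orbit-count bookkeeping. Moreover, Theorem~\ref{thm:plusOne} uses only transitivity, so your first paragraph actually shows $R_\Gamma(3,\dots,3)=3$ for every transitive $\Gamma\leq S_m$, not just $C_m$.

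In exchange, the paper's approach gives a complete description of the switching classes of $3$-vertex graphs. In particular, for even $m$ it shows that which monochromatic triangles are reachable is governed by the parity of the colour sum. That parity invariant is reused later, e.g.\ in the proof of Theorem~\ref{thm:3434} to rule out $K_3^{(1)}$ and $K_3^{(3)}$.
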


\begin{proof}
If $m$ is odd, then $|\mathcal{K}_3^m/{C_m}^n| = m^{{\binom{3}{2}} - 3} =  1$, which implies that any pair of elements of $\mathcal{K}_3^m$ are in the same orbit. 
In other words, any $m$-edge-coloured complete graph on $3$ vertices is $C_m$-switch equivalent to $K_3^{(1)}$.

If $m$ is even, then $|\mathcal{K}_3^m/{C_m}^n| = 2m^{{\binom{3}{2}} - 3} =  2$. 
To complete the proof it suffices to show that $K_3^{(1)}$ and $K_3^{(2)}$ are in different orbits of $\mathcal{K}_3^m/C_m^n$.
This then implies that every element of $\mathcal{K}_3^m$ is $C_m$-switch equivalent to one of  $K_3^{(1)}$ or $K_3^{(2)}$.

When $m$ is even, a switch at a vertex $u$ with an element of $C_m$ preserves the parity of the sum of edges incident with $u$ and thus the parity of the sum of the edges in the $m$-edge-coloured graph.
Since the parity of the sum of edges in $K_3^{(1)}$ and in $K_3^{(2)}$ are different, these $m$-edge-coloured graphs must lay in different orbits of $\mathcal{K}_3^m/C_m^n$.
Therefore every $m$-edge-coloured complete graph on $3$ vertices is $C_m$-switch equivalent to $K_3^{(1)}$ or $K_3^{(2)}$.
\end{proof}

\begin{lemma}\label{lem: even uniform switching}
    Let $m$ be even and $n \geq 2$. 
    The monochromatic complete graphs $K_{n}^{(i)}$ and $K_{n}^{(j)}$ are $C_m$-switch equivalent if and only if $j-i\equiv 0\pmod 2$.
\end{lemma}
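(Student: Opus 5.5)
We prove the two directions separately, using throughout the description of $C_m$-switching given above: after switching with $\pi_\ell = (12\cdots m)^{k_\ell}$ at each vertex $u_\ell$, every edge $u_iu_j$ has colour $c(u_iu_j)+k_i+k_j \pmod m$.

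\emph{Sufficiency.} Suppose $j-i \equiv 0 \pmod 2$, and write $j-i = 2k$ for some integer $k$. Switch at every vertex of $K_n^{(i)}$ with $(12\cdots m)^{k}$. Each edge $u_au_b$ then receives colour $i+k+k = i+2k \equiv j \pmod m$. So $K_n^{(i)}$ is switched to $K_n^{(j)}$.

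\emph{Necessity.} Suppose $K_n^{(i)}$ and $K_n^{(j)}$ are $C_m$-switch equivalent. Because $C_m$ is abelian, there is a canonical switching sequence $\Pi = (u_1,\pi_1),\dots,(u_n,\pi_n)$ with $\pi_\ell = (12\cdots m)^{k_\ell}$ and $(K_n^{(i)})^\Pi = K_n^{(j)}$. Since $n \geq 2$, the edge $u_1u_2$ exists, and its colour after switching satisfies
\[ i + k_1 + k_2 \equiv j \pmod m. \]

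\begin{itemize}
\item If $n \geq 3$, compare the three edges $u_1u_2$, $u_1u_3$ and $u_2u_3$. This gives $k_1+k_2 \equiv k_1+k_3 \equiv k_2+k_3 \equiv j-i \pmod m$, so $k_1\equiv k_2\equiv k_3 \pmod m$. Hence $j-i \equiv 2k_1 \pmod m$, and since $m$ is even, reducing mod $2$ shows $j-i$ is even.
\item If $n = 2$, this argument is unavailable: with the single edge $u_1u_2$, taking $k_1 = j-i$ and $k_2 = 0$ shows that every colour is reachable. The statement therefore appears to be false for $n=2$, and the parity claim holds only for $n \geq 3$. I would either restrict the hypothesis to $n\ge 3$ or note this exception explicitly.
\end{itemize}

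The $n \geq 3$ case can also be seen from the invariant used in the proof of Theorem~\ref{thm:allThree}. A switch at one vertex adds $2k$ to the colour sum of any triangle containing that vertex and leaves the other triangles unchanged, so the parity of the colour sum of every triangle is preserved. A monochromatic triangle of colour $i$ has colour sum $3i$, whose parity is that of $i$, and parity mod $2$ is well defined because $m$ is even. So the parity of $i$ is an invariant of the equivalence class, which gives $j \equiv i \pmod 2$.
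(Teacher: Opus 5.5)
Your proof is correct and follows essentially the paper's route. The paper also passes to the canonical switching form, sets up the system $i+k_{t_1}+k_{t_2}\equiv j \pmod m$ for $1\le t_1<t_2\le n$, and takes the uniform choice $k_\ell\equiv \frac{j-i}{2}$ for sufficiency. It then merely asserts that there is no solution when $j-i$ is odd; your triangle computation forcing $k_1\equiv k_2\equiv k_3$ (or, equivalently, the triangle-parity invariant from Theorem~\ref{thm:allThree}) actually supplies that step.

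Your caveat about $n=2$ is also correct, and it exposes a genuine error in the statement and in the paper's proof. For $n=2$ the system is the single congruence $i+k_1+k_2\equiv j \pmod m$, which is always solvable, so the ``only if'' direction must be restricted to $n\ge 3$. This restriction is harmless for the rest of the paper, whose later applications of the lemma use only the ``if'' direction.
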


\begin{proof}
    As in our approach in the proof of Theorem \ref{thm:ker}, it suffices to find a solution to the following system of congruences.
    \[
    i+k_{t_1}+k_{t_{2}}\equiv~j\pmod m \quad 1 \leq t_1 < t_2 \leq n
    \]
    When $j-i\equiv 0\pmod 2$ we have the unique solution $k_1\equiv\cdots\equiv k_n\equiv\frac{j-i}{2}\mod m$.
    Otherwise, when $j-i\equiv 1\pmod 2$ there is no solution.
\end{proof}

\begin{theorem}\label{thm:evenOERamsey}
    If $m$ is an even integer, then
    \begin{equation*}
        R_{C_m}(a_1,a_2,\ldots,a_m)=R_{C_m}(\mathcal{O},\mathcal{E},\mathcal{O},\mathcal{E},\ldots,\mathcal{O},\mathcal{E})
    \end{equation*}
    where $\mathcal{O}=\min\{a_1,a_3,\ldots,a_{m-1}\}$ and $\;\mathcal{E}=\min\{a_2,a_4,\ldots,a_m\}$.
\end{theorem}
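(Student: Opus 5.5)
We will prove the equality by showing that, for every $G \in \mathcal{K}_n^m$, the class $[G]$ contains a graph with a copy of $K_{a_i}^{(i)}$ for some $i$ if and only if it contains a graph with a monochromatic complete subgraph of some odd colour of order $\mathcal{O}$, or of some even colour of order $\mathcal{E}$. Since both Ramsey numbers are the least $n$ for which this property holds for every class, it suffices to prove this equivalence class by class. The key tool is Lemma~\ref{lem: even uniform switching}: it says that, within a $C_m$-switching class, a monochromatic $K_t$ of colour $i$ can be transformed into a monochromatic $K_t$ of colour $j$ on the same vertex set whenever $i \equiv j \pmod 2$. The transformation uses the switch $k = (j-i)/2$ at every vertex of that $K_t$. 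Only edges incident with these $t$ vertices change colour, but that is harmless since we only need the existence of the monochromatic subgraph.

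For the forward direction, suppose $H \in [G]$ contains $K_{a_i}^{(i)}$. If $i$ is odd, then $a_i \geq \mathcal{O}$, so $H$ contains a monochromatic $K_{\mathcal{O}}$ of colour $i$, which is odd. Since all odd entries of the tuple $(\mathcal{O},\mathcal{E},\dots,\mathcal{O},\mathcal{E})$ equal $\mathcal{O}$, this witnesses the right-hand property. The case $i$ even is identical with $\mathcal{E}$. Hence every $n$ that works for $(a_1,\dots,a_m)$ also works for the modified tuple, giving $R_{C_m}(\mathcal{O},\mathcal{E},\dots) \leq R_{C_m}(a_1,\dots,a_m)$.

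For the reverse direction, suppose $H \in [G]$ contains a monochromatic $K_{\mathcal{O}}$ of some odd colour $i$. Choose an odd $j$ with $a_j = \mathcal{O}$, which is possible by the definition of $\mathcal{O}$ as a minimum. Apply Lemma~\ref{lem: even uniform switching}, realised on the vertex set $S$ of this $K_{\mathcal{O}}$: switch every $u \in S$ by $(12\cdots m)^{(j-i)/2}$. Each edge inside $S$ then has colour $i + (j-i) = j$, yielding $H' \in [G]$ containing $K_{a_j}^{(j)}$. The even case is analogous, with $\mathcal{E}$ in place of $\mathcal{O}$. This gives the reverse inequality.

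The main obstacle is only to make sure that the local switch is legitimate, and that the lemma, stated for whole monochromatic graphs, applies to a subgraph. This holds because colours on edges inside $S$ depend only on the switches at their endpoints. So we just cite the congruence $c + k_{t_1} + k_{t_2} \equiv j$ from the lemma's proof, restricted to $S$.
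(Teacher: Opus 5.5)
Your proof is correct and follows the paper's route: you get $R_{C_m}(\mathcal{O},\mathcal{E},\dots,\mathcal{O},\mathcal{E})\le R_{C_m}(a_1,\dots,a_m)$ by monotonicity, and the reverse inequality by using the uniform switch $(12\cdots m)^{(j-i)/2}$ from Lemma~\ref{lem: even uniform switching} to recolour the monochromatic $K_{\mathcal{O}}$ or $K_{\mathcal{E}}$ to a colour $j$ of the same parity attaining the minimum. Your version is slightly more explicit than the paper's, since you note that the switch need only be applied on the vertex set of that subgraph and that only the ``if'' direction of the lemma is needed.
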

\begin{proof}
    Clearly $R_{C_m}(a_1,a_2,\ldots,a_m)\geq R_{C_m}(\mathcal{O},\mathcal{E},\mathcal{O},\mathcal{E},\ldots,\mathcal{O},\mathcal{E})$.

    Let $n=R_{C_m}(\mathcal{O},\mathcal{E},\mathcal{O},\mathcal{E},\ldots,\mathcal{O},\mathcal{E})$. Let $i$ be an odd integer and let  $j$ an even integer such that $a_i=\mathcal{O},a_j=\mathcal{E}$. 
    Any $m$-edge-colouring of $K_n$ can be $C_m$-switched to contain a $K_{\mathcal{O}}^{(x)}$ or $K_{\mathcal{E}}^{(y)}$ for $x\in\{1,3,\ldots,m-1\},~y\in\{2,4,\ldots,m\}$. 
    By Lemma \ref{lem: even uniform switching} for even $m$, $K_{\mathcal{O}}^{(x)}$ and $K_{\mathcal{O}}^{(i)}$ are $C_m$-switch equivalent, as are $K_{\mathcal{E}}^{(y)}$ and $K_{\mathcal{E}}^{(j)}$.
\end{proof}

\begin{corollary}
    For even $m$ and for all $a_3,\dots a_m \geq 3$, $R_{C_m}(3,3,a_3,\dots, a_m) = 3$.
\end{corollary}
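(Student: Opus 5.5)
The corollary should follow directly from Theorem \ref{thm:evenOERamsey} together with Theorem \ref{thm:allThree}, plus a trivial lower bound.

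For the upper bound, note that $a_1 = 3$ is odd-indexed and $a_2 = 3$ is even-indexed, and every $a_i \geq 3$. So in the notation of Theorem \ref{thm:evenOERamsey} we get $\mathcal{O} = \min\{a_1,a_3,\dots,a_{m-1}\} = 3$ and $\mathcal{E} = \min\{a_2,a_4,\dots,a_m\} = 3$. Theorem \ref{thm:evenOERamsey} then gives
\[
R_{C_m}(3,3,a_3,\dots,a_m) = R_{C_m}(3,3,\dots,3).
\]
Theorem \ref{thm:allThree} evaluates the right-hand side as $3$.

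The lower bound also needs a word, although Theorem \ref{thm:allThree} already covers it. On $2$ vertices, every coloured graph has only a single edge, so it contains no $K_3$ of any colour. Switching does not change this. Hence the number is at least $3$.

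There is no real obstacle here. The only point to check is that the hypotheses of Theorem \ref{thm:evenOERamsey} are met: $m$ must be even, and the minima $\mathcal{O}$ and $\mathcal{E}$ must be computed correctly. This uses the assumption that $a_3,\dots,a_m \geq 3$, so no entry drops below $3$.
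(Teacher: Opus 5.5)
Your proposal is correct and matches the paper's intended argument. The paper states this corollary immediately after Theorem~\ref{thm:evenOERamsey} without a separate proof: with $\mathcal{O}=\mathcal{E}=3$ that theorem reduces the quantity to $R_{C_m}(3,3,\dots,3)$, which Theorem~\ref{thm:allThree} evaluates as $3$. Your separate lower-bound remark is harmless but already implied by that equality.
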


When $m$ is odd we apply similar techniques to obtain the following.

\begin{lemma}\label{lem: odd uniform switching}
    If $m$ is odd, then $K_{n}^{(i)}$ and $K_{n}^{(j)}$ are $C_m$-switch equivalent for any $i,j\in[m]$.
\end{lemma}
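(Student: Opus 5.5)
Following the proof of Lemma \ref{lem: even uniform switching}, we look for a uniform switching sequence. We want a sequence $\Pi = (u_1,\pi_1),\dots,(u_n,\pi_n)$ with $\pi_t = (12\cdots m)^{k_t}$ such that $K_n^{(i)}$ is taken to $K_n^{(j)}$. By the colour formula for $C_m$-switching, it suffices to solve the system
\[
i + k_{t_1} + k_{t_2} \equiv j \pmod m \qquad 1 \leq t_1 < t_2 \leq n .
\]

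We take every $k_t$ equal to a common value $k$. The system then reduces to the single congruence $2k \equiv j - i \pmod m$. Since $m$ is odd, $2$ is invertible modulo $m$. Concretely, $2^{-1} \equiv (m+1)/2 \pmod m$, so we may set
\[
k \equiv \tfrac{m+1}{2}\,(j-i) \pmod m .
\]
With this choice, every edge $u_{t_1}u_{t_2}$ has colour $i + 2k \equiv j \pmod m$ after switching. We interpret colours in $[m]$ in the usual way, with residue $0$ read as colour $m$. Hence $K_n^{(i)}$ is switched to $K_n^{(j)}$, and the two graphs are $C_m$-switch equivalent.

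The only point needing care is the invertibility of $2$ modulo $m$, which is exactly where oddness of $m$ is used; in the even case this step fails and the parity obstruction of Lemma \ref{lem: even uniform switching} appears instead. The case $n = 1$ is vacuous, and nothing else is an obstacle. Since switch equivalence is an equivalence relation, a single direction suffices.
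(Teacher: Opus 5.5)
Your proof is correct and is essentially the argument the paper intends: the paper gives no separate proof, saying only that this lemma follows from the same technique as the even case. That technique is to solve $i + k_{t_1} + k_{t_2} \equiv j \pmod m$ by taking all $k_t$ equal to a common $k$ with $2k \equiv j-i \pmod m$, which is always solvable when $m$ is odd because $2$ is invertible modulo $m$.
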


\begin{theorem}\label{thm:oddRamsey}
    If $m$ is an odd integer, then
    \begin{equation*}
        R_{C_m}(a_1,a_2,\ldots,a_m)=R_{C_m}(N,N,\ldots,N)
    \end{equation*}
    where $N=\min\{a_1,a_2,\ldots,a_m\}$. 
\end{theorem}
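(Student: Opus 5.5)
The argument mirrors the proof of Theorem \ref{thm:evenOERamsey}, with Lemma \ref{lem: odd uniform switching} in place of Lemma \ref{lem: even uniform switching}. It has two inequalities.

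For the lower bound $R_{C_m}(a_1,\dots,a_m)\geq R_{C_m}(N,\dots,N)$, we use monotonicity of the parameter. Let $n=R_{C_m}(a_1,\dots,a_m)$. Every $m$-edge-coloured $K_n$ is $C_m$-switch equivalent to one containing $K_{a_i}^{(i)}$ for some $i$. Since $N\leq a_i$, that graph also contains $K_N^{(i)}$. Hence $R_{C_m}(N,\dots,N)\leq n$.

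For the upper bound, let $n=R_{C_m}(N,\dots,N)$ and fix $i_0$ with $a_{i_0}=N$. Take any $G\in\mathcal{K}_n^m$. By the choice of $n$ there is a switching sequence $\Pi$ such that $G^\Pi$ contains a copy of $K_N^{(x)}$ for some $x\in[m]$, say on vertex set $S$. By Lemma \ref{lem: odd uniform switching}, $K_N^{(x)}$ and $K_N^{(i_0)}$ are $C_m$-switch equivalent.

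We need this equivalence to act inside $G^\Pi$ without disturbing the rest of the argument. In the proof of that lemma the solution is uniform: switch every vertex of $S$ by $(12\cdots m)^{k}$, where $2k\equiv i_0-x \pmod m$. This $k$ exists because $2$ is invertible modulo $m$ when $m$ is odd. Apply exactly these switches at the vertices of $S$ in $G^\Pi$ and no switches elsewhere. Every edge inside $S$ then has colour $x+2k\equiv i_0$. Edges outside $S$ may change colour, but that is irrelevant.

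The resulting graph is $C_m$-switch equivalent to $G$ and contains $K_{a_{i_0}}^{(i_0)}$. Therefore $R_{C_m}(a_1,\dots,a_m)\leq n$.

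The only delicate point is this localisation step: Lemma \ref{lem: odd uniform switching} concerns complete graphs in isolation, so we must note that switching only at the vertices of $S$ realises the equivalence on the subgraph induced by $S$. This holds because the colour of an edge $u_iu_j$ depends only on $k_i+k_j$.
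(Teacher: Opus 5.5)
Your proof is correct and follows the paper's intended argument, which mirrors Theorem \ref{thm:evenOERamsey}: monotonicity gives one inequality, and Lemma \ref{lem: odd uniform switching} gives the other. Your explicit localisation step is a useful addition. Switching only the vertices of $S$ by $(12\cdots m)^k$ with $2k \equiv i_0 - x \pmod m$ fills in a detail that the paper leaves implicit.
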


\begin{corollary}
    For odd $m$ and for all $a_2,a_3,\dots, a_m \geq 3$, $R_{C_m}(3,a_2, a_3,\dots, a_m) = 3$.
\end{corollary}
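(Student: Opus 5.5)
The corollary is that for odd $m$ and all $a_2,\dots,a_m \geq 3$ we have $R_{C_m}(3,a_2,\dots,a_m) = 3$. The plan is to reduce to the all-threes case and then invoke Theorem \ref{thm:allThree}.

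\textbf{Reduction.} Set $N = \min\{3, a_2, \dots, a_m\}$. Since every $a_i \geq 3$, we get $N = 3$. Theorem \ref{thm:oddRamsey} then gives
\[
R_{C_m}(3,a_2,\dots,a_m) = R_{C_m}(3,3,\dots,3).
\]
Theorem \ref{thm:allThree} says the right-hand side equals $3$, which proves the claim.

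\textbf{Direct check.} As a sanity check, and to avoid leaning on the unproved-in-text Theorem \ref{thm:oddRamsey}, the argument can also be run directly.

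For the upper bound, take any $G \in \mathcal{K}_3^m$ with $m$ odd. The proof of Theorem \ref{thm:allThree} shows that $G$ is $C_m$-switch equivalent to $K_3^{(1)}$. That graph contains a $K_{3}^{(1)}$, which is exactly the required $K_{a_1}^{(1)}$ with $a_1 = 3$.

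Concretely, we need $k_1, k_2, k_3$ with $c(u_iu_j) + k_i + k_j \equiv 1 \pmod m$ for every pair $i<j$. This is a linear system whose matrix has determinant $2$, and $2$ is invertible modulo $m$ because $m$ is odd. So the system is solvable.

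For the lower bound, on $2$ vertices no graph can contain $K_3$, and every $a_i \geq 3$, so no switching helps. Hence $R_{C_m}(3,a_2,\dots,a_m) \geq 3$.

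\textbf{Main obstacle.} The only subtle point is the invertibility of $2$ modulo $m$. It holds precisely because $m$ is odd, and this is what separates this corollary from the even case, where only two colours are reachable.
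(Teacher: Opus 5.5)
Your proposal is correct and follows the paper's route: the corollary is obtained by applying Theorem \ref{thm:oddRamsey} with $N=3$ and then Theorem \ref{thm:allThree}. Your direct check is also a valid independent verification. One small correction there: the determinant of the $3\times 3$ system is $-2$, not $2$, which makes no difference since $-2$ is still a unit modulo odd $m$.
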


Using the tools developed above, we study $R_{C_3}(4,a_1,a_2)$, $R_{C_4}(3,4,a_1,a_2)$ and  $R_{C_4}(4,4,a_1,a_2)$.
By Theorems \ref{thm:evenOERamsey} and \ref{thm:oddRamsey}, it suffices to study $R_{C_3}(4,4,4)$, $R_{C_4}(3,4,3,4)$ and  $R_{C_4}(4,4,4,4)$.
In the first two cases we find exact values for these parameters.

\begin{theorem}\label{thm:3434}
For $\Gamma = C_4$, $R_{\Gamma}(3,4,3,4) = R(2,3,2,3) + 1$.
\end{theorem}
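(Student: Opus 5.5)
The plan is to establish the two inequalities separately, after first computing $R(2,3,2,3)$. A copy of $K_2^{(1)}$ or $K_2^{(3)}$ is just a single edge of colour $1$ or $3$. So a $4$-edge-colouring of $K_n$ avoiding all four forbidden subgraphs uses only colours $2$ and $4$ and has no monochromatic triangle. Hence $R(2,3,2,3)=R(3,3)=6$, and the claim becomes $R_{C_4}(3,4,3,4)=7$.

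\textbf{Upper bound.} Since $C_4$ acts transitively on $[4]$, Theorem \ref{thm:plusOne} gives directly
\[
R_{C_4}(3,4,3,4)\le R(2,3,2,3)+1=7 .
\]

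\textbf{Lower bound: the construction.} I will exhibit $G\in\mathcal{K}_6^4$ none of whose switch-equivalent graphs contains $K_3^{(1)}$, $K_3^{(3)}$, $K_4^{(2)}$ or $K_4^{(4)}$. Fix a vertex $v$ and colour every edge at $v$ with colour $1$. On $G-v\cong K_5$, take a colouring in colours $2$ and $4$ with no monochromatic triangle: a pentagon in colour $2$ and the complementary pentagram in colour $4$.

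\textbf{Lower bound: reduction to a per-vertex check.} Any forbidden monochromatic clique in a graph of $[G]$ contains some vertex $w$. By Theorem \ref{thm:smallerN} (using that switching at $w$ can then produce any colour), such a clique exists if and only if one of the following holds:
\begin{itemize}
\item[(a)] $G^{(1)}_w-w$ contains a $K_2$ which, after recolouring $w$, yields a $K_3$ of odd colour;
\item[(b)] $G^{(1)}_w-w$ contains a monochromatic triangle, which lifts to a $K_4$ of its own colour.
\end{itemize}
A triangle of even colour is harmless, but a $K_4$ of odd colour contains a $K_3$ of odd colour. So it suffices to show, for every $w$, that $G^{(1)}_w-w$ has all edges of even colour and no monochromatic triangle.

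\textbf{The per-vertex check.} For $w=v$ this holds by construction. For $w\ne v$, homogenising at $w$ with colour $1$ forces $k_v\equiv 0$, and $k_u$ odd for every $u\neq v,w$, since $c(uw)$ is even. Then:
\begin{itemize}
\item each edge $uu'$ inside $G-v-w$ receives even $+$ odd $+$ odd, which is even;
\item each edge $vu$ receives $1+0+$ odd, which is even.
\end{itemize}
So all colours in $G^{(1)}_w-w$ are even, which rules out case (a). By the symmetry of the pentagon, it suffices to treat a single $w\neq v$. For that one case, I will compute the shifts $k_u\in\{1,3\}$ explicitly and check that the resulting $2$/$4$-colouring of $K_5$ on $\{v\}\cup(V-v-w)$ has no monochromatic triangle. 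This is the step I expect to be the main obstacle.

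If the pentagon/pentagram choice fails that check, I would try two adjustments: varying which colour sits at $v$ (say $1$ or $3$, or a mixture), or replacing the pentagon with another triangle-free $2$-colouring. The parity argument above is unaffected by either change.
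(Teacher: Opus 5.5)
Your plan is sound, and the step you flagged as the main obstacle does go through for the pentagon/pentagram colouring, so no fallback is needed. Until that check is actually carried out, however, the lower bound is not proved.

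Label $G-v$ by $0,1,2,3,4$, with colour-$2$ edges $\{i,i+1\}$ and colour-$4$ edges $\{i,i+2\}$ (indices mod $5$), and take $w=0$ with $k_0=0$. Then $k_v=0$, $k_1=k_4=3$ and $k_2=k_3=1$. In $G^{(1)}_0-0$ the colour-$2$ edges form the $5$-cycle $(v,2,1,4,3)$ and the colour-$4$ edges form the complementary $5$-cycle $(v,1,3,2,4)$. So there is no monochromatic triangle, and the rotation $i\mapsto i+1$, which fixes $v$ and preserves $G$, transfers this to every $w\neq v$.

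One bookkeeping point: $k_v\equiv 0$ is forced only once you fix $k_w=0$. In general $k_v\equiv -k_w$. The homogenisation with switch $k_w$ at $w$ differs from the one with $k_w=0$ by the uniform shift $-2k_w$ on every edge of $G-w$. That shift preserves parity and at most interchanges colours $2$ and $4$. This is what makes your conditions (a) and (b) independent of the choice of $G^{(1)}_w$, and so justifies the ``if and only if'' in your reduction.

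Your route differs from the paper's. The paper uses the same $G$ but invokes Theorem~\ref{thm:smallerN} only at $v$, and then treats cliques inside $G-v$ directly:
\begin{itemize}
\item All colours in $G-v$ are even, so every triangle has even colour sum. The parity of this sum is invariant under $C_4$-switching, which excludes $K_3^{(1)}$ and $K_3^{(3)}$.
\item Every $K_4$ in $G-v$ splits into a colour-$2$ path and a colour-$4$ path on four vertices. For such a $K_4$, the system of congruences mod $4$ needed to make it monochromatic is inconsistent.
\end{itemize}

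You instead homogenise at every vertex and use the symmetry of $G$, so the whole lower bound reduces to one explicit recolouring. That recolouring also shows $G^{(1)}_w-w$ is again a pentagon/pentagram colouring for every $w$, the analogue of the $G_u\cong G$ phenomenon the paper observes for $R_{C_3}(4,4,4)$.

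Your version also tracks colour parity for cliques through $v$, and this is genuinely needed there. Theorem~\ref{thm:smallerN} at $v$ alone does not exclude an odd triangle through $v$, since $G-v$ certainly contains monochromatic edges.

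The paper's argument, for its part, uses no symmetry of $G$. It gives a direct switching obstruction for each $K_4$ avoiding $v$.
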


\begin{proof}
By Theorem \ref{thm:plusOne},
$R_{\Gamma}(3,4,3,4) \leq R(2,3,2,3) + 1 = 7$.
Let $\mathcal{K}=\{K_3^{(1)},K_4^{(2)},K_3^{(3)},K_4^{(4)}\}$.
To complete the proof it suffices to find a $4$-edge-coloured graph on $6$ vertices that is not $C_4$-switch equivalent to one containing an element of $\mathcal{K}$.

Consider the $4$-edge-colouring of $K_5$ formed by partitioning $K_5$ into two disjoint $5$-cycles and respectively colouring them with colour $2$ and colour $4$.
Observe this $4$-edge-coloured graph contains no element of $\mathcal{K}$.
Let $G$ be the graph formed from this $4$-edge-coloured graph in  by adding a universal vertex, $v$, and colouring all edges incident with this edge with colour $1$.
By construction, $G$ contains no element of $\mathcal{K}$.
By Corollary \ref{thm:smallerN}, any element of $\mathcal{K}$ in an element of $[G]$ cannot contain $v$.
And so it suffices to prove $[G-v]$ contains no graph that has a element of $\mathcal{K}$.

Since every edge of $[G-v]$ has even colour the sum of colours in every copy of $K_3$ is even.
Following an argument similar to the one in the proof of Theorem \ref{thm:allThree} yields the conclusion that $G-v$ cannot be switched to contain $K_3^{(1)}$ or $K_3^{(3)}$.

Recall that a complete graph on $4$ vertices can be partitioned into two paths with $4$ vertices.
Every copy of $K_4$ in $[G-v]$ is isomorphic to one where one path on $4$ vertices has edges of colour $2$ and the other has colour $4$.

For such a copy of $K_4$ to be made monochromatic we must switch the edges of colour $2$ to colour $4$ or vice versa.
By Lemma \ref{lem: even uniform switching} it suffices to consider one of the cases.
Let $v_1v_2$, $v_2v_3$ and $v_3v_4$ be edges of colour $2$ in such a copy of $K_4$. 
Switching a copy of $K_4$ in $G-{u_1}$ to be monochromatic of colour $4$ requires solving the following system of simultaneous congruences:
    \begin{align*}
        k_1+k_2&\equiv 2 \pmod 4\\
        k_2+k_3&\equiv 2 \pmod 4\\
        k_3+k_4&\equiv 2 \pmod 4\\
        k_1+k_4&\equiv 0 \pmod 4\\
        k_1+k_3&\equiv 0 \pmod 4\\
        k_2+k_4&\equiv 0 \pmod 4.
    \end{align*}
One can verify no such solution exists.
Since no solution exists, no element of $[G-v]$ contains any member of $\mathcal{K}$,
which implies no element of $[G]$ contains any member of $\mathcal{K}$.
Therefore $R_\Gamma(3,4,3,4) > 6$, and so $R_{\Gamma}(3,4,3,4)=7$.
\end{proof}

Applying Theorem \ref{thm:evenOERamsey} now yields the following.

\begin{corollary}
For $\Gamma = C_4$ and all $a_1 \geq 3$ and $a_2 \geq 4$, $R_{\Gamma}(3,4,a_1,a_2)=7$.
\end{corollary}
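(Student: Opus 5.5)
The statement to prove is the corollary: $R_{C_4}(3,4,a_1,a_2)=7$ for all $a_1\ge 3$ and $a_2\ge 4$. The plan is to sandwich it between $R_{C_4}(3,4,3,4)$ and the value $7$ from Theorem \ref{thm:3434}, using Theorem \ref{thm:evenOERamsey} to pass between the two parameter lists.

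For the \emph{equality}, take $m=4$ and apply Theorem \ref{thm:evenOERamsey} to $R_{C_4}(3,4,a_1,a_2)$. The odd-indexed parameters are $a_1$ in position $1$ equal to $3$ and the value $a_1\ge 3$ in position $3$, so $\mathcal{O}=\min\{3,a_1\}=3$. The even-indexed parameters are $4$ and $a_2\ge 4$, so $\mathcal{E}=\min\{4,a_2\}=4$. Hence $R_{C_4}(3,4,a_1,a_2)=R_{C_4}(3,4,3,4)$, and Theorem \ref{thm:3434} gives $R_{C_4}(3,4,3,4)=R(2,3,2,3)+1$.

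It remains to confirm $R(2,3,2,3)=6$. A parameter equal to $2$ means any edge of that colour already gives the required $K_2$. So a colouring avoiding all four targets uses only colours $2$ and $4$ and has no monochromatic triangle in either. Hence $R(2,3,2,3)=R(3,3)=6$, matching the value $7$ cited in the proof of Theorem \ref{thm:3434}.

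There is no real obstacle: it is a direct substitution. The only points to verify are that the minima are attained by the fixed entries $3$ and $4$, which is exactly what the hypotheses $a_1\ge 3$ and $a_2\ge 4$ guarantee, and that the positions of $a_1$ and $a_2$ have the right parities.
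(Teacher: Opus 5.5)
Your proof is correct and follows the paper's route exactly: Theorem~\ref{thm:evenOERamsey} with $\mathcal{O}=\min\{3,a_1\}=3$ and $\mathcal{E}=\min\{4,a_2\}=4$ reduces the parameter to $R_{C_4}(3,4,3,4)$, and Theorem~\ref{thm:3434} evaluates that as $R(2,3,2,3)+1=7$. Your check that $R(2,3,2,3)=R(3,3)=6$ (any edge of colour $1$ or $3$ is already a target $K_2$) supplies a value the paper uses without comment; the only blemish is notational, where you call the first parameter ``$a_1$'', which clashes with the corollary's $a_1$ in position $3$.
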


Using a similar approach we compute $R_{C_3}(4,4,4)$. 
In \cite{G55} the authors give an example of a $3$-edge-coloured graph on $16$ vertices that contains no monochromatic $K_3$.

Given $R(3,3,3)= 17$ \cite{G55}, we proceed as in the proof Theorem \ref{thm:3434} where $G$ is constructed from the $16$ vertex graph by adding a new vertex $v$ that is adjacent to all other vertices with edges of colour $1$.
By computer search we confirm no element of $[G-v]$ contains a monochromatic $K_4$.

\begin{theorem}\label{thm:R418}
$R_{C_3}(4,4,4) = R(3,3,3)+1$.
\end{theorem}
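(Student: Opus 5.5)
The upper bound follows directly from Theorem \ref{thm:plusOne}. The group $C_3$ acts transitively on $[3]$, so $R_{C_3}(4,4,4) \leq R(3,3,3)+1 = 18$. The real work is the lower bound. For this I will exhibit a $3$-edge-coloured $K_{17}$ that is not $C_3$-switch equivalent to any graph containing a monochromatic $K_4$.

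\textbf{Construction.} Start from the Greenwood--Gleason colouring $F$ of $K_{16}$. In $F$ each colour class is the Clebsch graph, so $F$ has no monochromatic triangle. Let $G$ be obtained by adding a vertex $v$ joined to every vertex of $F$ by edges of colour $1$. Then $G$ is homogenised at $v$, that is, $G = G_v^{(1)}$, and $G - v = F$.

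\textbf{No monochromatic $K_4$ through $v$.} Suppose some element of $[G]$ had a monochromatic $K_4$ containing $v$. By Theorem \ref{thm:smallerN}, with $n'=4$, the graph $G^{(1)}_v - v = F$ would then contain a monochromatic $K_3$. This is impossible by the choice of $F$. So any monochromatic $K_4$ in an element of $[G]$ avoids $v$.

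\textbf{No monochromatic $K_4$ avoiding $v$.} Switching at $v$ changes no edge of $G-v$. Therefore it suffices to show that no element of $[F]$ contains a monochromatic $K_4$. By Lemma \ref{lem: odd uniform switching}, all monochromatic $K_4$'s are equivalent regardless of colour, so it is enough to rule out $K_4^{(1)}$. For a fixed $4$-set $\{u_1,\dots,u_4\}$ we need $k_i \in \mathbb{Z}_3$ with
\[
c_F(u_iu_j) + k_i + k_j \equiv 1 \pmod 3 \quad \text{for all } i<j.
\]
These are six equations in four unknowns. The solvability condition can be read off by combining them. For instance,
\[
(k_1+k_2)+(k_3+k_4)-(k_1+k_3)-(k_2+k_4) = 0,
\]
and the same holds for the other pairings. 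This forces
\[
c_{12}+c_{34} \equiv c_{13}+c_{24} \equiv c_{14}+c_{23} \pmod 3.
\]
Conversely, since $2$ is invertible mod $3$, these two conditions together allow each $k_i$ to be solved for, with the triangle relations giving $2k_1$. The criterion is therefore that the three perfect-matching sums agree mod $3$.

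\textbf{Main obstacle.} What remains is verifying that no $4$-subset of $F$ has equal perfect-matching sums mod $3$. There are $\binom{16}{4}=1820$ subsets, which is what the paper does by computer. To reduce the work by hand, I would use the structure of $F$ as a Cayley colouring on $\mathbb{F}_{16}$, where colour is determined by the cubic-residue class of $x-y$. The automorphism group is transitive on vertices and more, so up to symmetry it suffices to check $4$-sets containing $0$, and then orbit representatives of the remaining triples.

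Observe also that the equal-sums condition fails automatically in two cases: if the three matching sums are $0,1,2$ in some order, or if two of them agree and the third differs. Since $F$ has no monochromatic triangle, no $4$-set has all six edges the same colour. Each $4$-set therefore has at most a few colour patterns, and these should be enumerable. I expect this case check, whether by computer or by careful orbit enumeration, to be the main step. Once it is done, $[G]$ contains no monochromatic $K_4$, giving $R_{C_3}(4,4,4) > 17$ and hence equality.
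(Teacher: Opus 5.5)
Your proposal follows the paper's proof: the upper bound from Theorem~\ref{thm:plusOne}, the Greenwood--Gleason colouring $F$ with a colour-$1$ universal vertex, Theorem~\ref{thm:smallerN} to exclude monochromatic $K_4$'s through $v$, and a finite check on $F$ that the paper leaves to computer search and that your (correct) equal-matching-sum criterion makes explicit. That check can also be done by hand, and the following sketch is my own verification rather than something in the paper: translate, and scale by an element of $\mathbb{F}_{16}^*$ (which shifts all six colours by the same amount), so that the $4$-set $\{w,x,y,z\}$ has $w=0$ and $\sigma=w+x+y+z\in\{0,1\}$; if $\sigma=0$ each perfect matching is monochromatic, so equal sums would force a monochromatic $K_4$, while if $\sigma=1$ the three sums are $f(x),f(y),f(z)$ with $f(u)=\chi(u)+\chi(u+1)$ ($\chi$ the discrete logarithm mod $3$), the images of $x,y,z$ form a line of the Fano plane $(\mathbb{F}_{16}/\mathbb{F}_2)\setminus\{0\}$, and a direct computation (e.g.\ with $\alpha^4=\alpha+1$) shows $f$ is constant on none of its $7$ lines.
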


\begin{proof}
	By Theorem \ref{thm:plusOne}, $R_{C_3}(4,4,4) \leq R(3,3,3)+1 = 18$.
	Let $G$ be the $3$-edge-coloured graph on $17$ vertices formed from the $3$-edge-coloured graph on $16$ vertices described in \cite{G55} by adding a universal vertex $v$, adjacent to all existing vertices with edges of colour $1$.
	By construction, $G$ is homogenised at $v$ and contains no monochromatic $K_4$.
	By computer search, $G-v$ cannot be switched to contain a monochromatic $K_4$.
	The result follows by Theorem \ref{thm:smallerN}.
\end{proof}

\begin{corollary}
For all $a_1, a_2 \geq 4$, $R_{C_3}(4,a_1,a_2)=18$.
\end{corollary}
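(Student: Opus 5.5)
This follows directly by combining Theorem~\ref{thm:oddRamsey} with Theorem~\ref{thm:R418}. The plan is to reduce the general parameter $R_{C_3}(4,a_1,a_2)$ to the symmetric one $R_{C_3}(4,4,4)$, and then read off its value.

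First, since $m=3$ is odd, Theorem~\ref{thm:oddRamsey} gives
\[
R_{C_3}(4,a_1,a_2) = R_{C_3}(N,N,N), \qquad N=\min\{4,a_1,a_2\}.
\]
By hypothesis $a_1,a_2 \geq 4$, so $N=4$ and hence $R_{C_3}(4,a_1,a_2)=R_{C_3}(4,4,4)$.

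Underlying this is Lemma~\ref{lem: odd uniform switching}. It says all monochromatic $K_4$'s are $C_3$-switch equivalent regardless of colour. So the upper bound comes from switching a monochromatic $K_4$ into colour $1$, whose target size is exactly $4$. The lower bound holds because each $a_i \geq 4$ only makes the target cliques larger.

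Second, Theorem~\ref{thm:R418} together with $R(3,3,3)=17$ \cite{G55} gives
\[
R_{C_3}(4,4,4)=R(3,3,3)+1=18,
\]
which completes the argument.

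There is no real obstacle here. All the substantive work, namely the computer verification that the $17$-vertex construction cannot be switched to contain a monochromatic $K_4$, was already done in Theorem~\ref{thm:R418}. The only point requiring care is checking that the minimum in Theorem~\ref{thm:oddRamsey} is indeed $4$, which is exactly why the hypothesis $a_1,a_2\ge 4$ is needed.
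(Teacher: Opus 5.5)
Your proposal is correct and follows the paper's own route: Theorem~\ref{thm:oddRamsey} with $N=\min\{4,a_1,a_2\}=4$ reduces $R_{C_3}(4,a_1,a_2)$ to $R_{C_3}(4,4,4)$. Theorem~\ref{thm:R418} then gives $R(3,3,3)+1=18$, exactly as the paper indicates in its remark preceding Theorem~\ref{thm:3434}.
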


Up to isomorphism, there are two 3-edge-colourings of $K_{16}$ that have no monochromatic $K_3$, the first of which is noted above and the second of which was first identified by an undergraduate student at the University of Waterloo in the 1960s (see note in  \cite{KS1968}).
Computer search on this latter 3-edge-colouring of $K_{16}$ with no monochromatic $K_3$ identified an element in its equivalence class that contains a monochromatic $K_4$.

In \cite{G55}, the author describes a $4$-edge-coloured graph on $41$ vertices that contains no monochromatic $K_4$.
Proceeding as above yields a $4$-edge-coloured complete graph $G$ on $42$ vertices that cannot be switched to contain a mononchromatic $K_4$ of any colour.
The best known upper bound for $R(3,3,3,3)$ is $62$ \cite{F04}.

\begin{theorem}
For $\Gamma = C_4$,  $43 \leq R_{\Gamma}(4,4,4,4) \leq R(3,3,3,3) +1$.
\end{theorem}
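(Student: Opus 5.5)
The upper bound is immediate from Theorem \ref{thm:plusOne}: $C_4$ acts transitively on $[4]$, so $R_{C_4}(4,4,4,4) \leq R(3,3,3,3)+1$. All the work is in the lower bound $43 \leq R_{C_4}(4,4,4,4)$. For this we exhibit a $4$-edge-coloured complete graph $G$ on $42$ vertices whose $C_4$-switch class $[G]$ contains no graph with a monochromatic $K_4$ of any colour.

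\textbf{Construction.} Take the $4$-edge-colouring $F$ of $K_{41}$ from \cite{G55} with no monochromatic $K_3$. Form $G$ by adding a universal vertex $v$ and giving every edge at $v$ colour $1$. Then $G$ is homogenised at $v$, and $G-v=F$.

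\textbf{Reduction to $G-v$.} By Theorem \ref{thm:smallerN}, some $H\in[G]$ has $v$ in a monochromatic $K_4$ if and only if $G_v^{(1)}-v$ contains a monochromatic $K_3$ without any switching. Since $G$ is already $G_v^{(1)}$ and $F$ has no monochromatic triangle, no member of $[G]$ has a monochromatic $K_4$ through $v$.

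For a monochromatic $K_4$ avoiding $v$, restrict any switching sequence to $V(G)\setminus\{v\}$. The colours of edges inside $G-v$ depend only on the switches at those vertices. So it suffices to show that no $H\in[F]$ contains a monochromatic $K_4$.

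\textbf{The main obstacle: showing $[F]$ has no monochromatic $K_4$.} Each $4$-set $\{w_1,\dots,w_4\}\subseteq V(F)$ must be checked. Write $c_{ab}$ for the colour of $w_aw_b$ in $F$. A monochromatic colour-$t$ copy on this set requires $k_1,\dots,k_4\in\mathbb{Z}_4$ with $c_{ab}+k_a+k_b\equiv t \pmod 4$ for all six pairs, as in the proof of Theorem \ref{thm:3434}.

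Summing around the two perfect matchings that form a $4$-cycle, say $\{12,34\}$ and $\{13,24\}$, gives a necessary condition: $c_{12}+c_{34}\equiv c_{13}+c_{24}\equiv c_{14}+c_{23} \pmod 4$. Conversely, if these three matching-sums agree, one can fix $k_1$ arbitrarily and solve for $k_2,k_3,k_4$. The only remaining constraint is a parity/consistency check, $2k_1\equiv t-c_{12}-c_{13}+c_{23}$, which is solvable for a suitable choice of $t$ of the right parity. So the question reduces to a purely combinatorial test on $F$: is there a $4$-set on which the three perfect-matching colour sums agree modulo $4$?

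I expect this test to be settled by computer search over all $\binom{41}{4}$ quadruples, as the paper does for $R_{C_3}(4,4,4)$. I would first try to exploit the cyclic (Paley-type, $\mathbb{Z}_{41}$ or $\mathbb{F}_{41}$) structure of the \cite{G55} colouring. Fixing $w_1=0$ by vertex-transitivity, perhaps together with a multiplier symmetry, cuts the search substantially, and the colour classes given by quartic residue cosets may make the matching-sum condition tractable.

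If the search finds such a quadruple in $F$, the fallback is to test other known extremal $4$-colourings of $K_{41}$ with no monochromatic triangle in place of $F$. This non-existence verification is the step I expect to be the real difficulty. With it in hand, no element of $[G]$ contains a monochromatic $K_4$, so $R_{C_4}(4,4,4,4)\geq 43$.
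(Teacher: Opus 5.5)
This is essentially the paper's argument: the upper bound comes from Theorem~\ref{thm:plusOne}, and the lower bound uses the \cite{G55} triangle-free colouring $F$ of $K_{41}$ plus a colour-$1$ universal vertex, Theorem~\ref{thm:smallerN} for copies through $v$, and a check that $[F]$ has no monochromatic $K_4$, which the paper simply attributes to computer search. Your matching-sum criterion in fact settles that step by hand if $F$ is the quartic-residue colouring of $\mathbb{F}_{41}$ that you anticipate, with $c(xy)\equiv\log_g(x-y)\pmod 4$: the products $X=(w_1-w_2)(w_3-w_4)$, $Y=(w_1-w_4)(w_2-w_3)$ and $Z=(w_1-w_3)(w_2-w_4)$ satisfy $X+Y=Z$, so equal matching sums would put $X,Y,Z$ in one coset of the fourth powers and make $\{0,X,Z\}$ a monochromatic triangle in $F$, and neither a search nor your fallback is needed.
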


\begin{corollary}
For $\Gamma = C_4$ and all $a_1,a_2 \geq 4$,  $43 \leq R_{\Gamma}(4,a_1,4,a_2) \leq 63$.
\end{corollary}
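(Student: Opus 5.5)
The plan is to reduce the corollary to the preceding theorem on $R_{C_4}(4,4,4,4)$, handling the two bounds separately. First I would apply Theorem \ref{thm:evenOERamsey}. For the parameter list $(4,a_1,4,a_2)$, the odd positions carry $4,4$ and the even positions carry $a_1,a_2$. So $\mathcal{O}=4$ and $\mathcal{E}=\min\{a_1,a_2\}\geq 4$, which gives
\[
R_{C_4}(4,a_1,4,a_2)=R_{C_4}(4,\mathcal{E},4,\mathcal{E}).
\]

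For the lower bound I would use monotonicity in the parameters. Take the $42$-vertex $4$-edge-coloured complete graph $G$ from the preceding theorem, none of whose $C_4$-switch equivalents contains a monochromatic $K_4$. Since $\mathcal{E}\geq 4$, none of them contains a $K_{\mathcal{E}}^{(2)}$ or $K_{\mathcal{E}}^{(4)}$ either, nor a $K_4^{(1)}$ or $K_4^{(3)}$. Hence $R_{C_4}(4,\mathcal{E},4,\mathcal{E})\geq 43$.

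For the upper bound, when $\mathcal{E}=4$ this is exactly the preceding theorem combined with the bound $R(3,3,3,3)\leq 62$ from \cite{F04}. Larger $\mathcal{E}$ makes the even-colour targets harder to reach, so monotonicity runs the wrong way here. Instead I would aim to find a monochromatic $K_4$ of odd colour within $63$ vertices, using only the two odd entries.

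The idea is to homogenise and then argue on the odd colours. By Theorem \ref{thm:smallerN}, it suffices to find a vertex $v$ such that, after homogenising $G$ at $v$ in colour $1$, the graph $G^{(1)}_v-v$ contains a monochromatic $K_3$ in colour $1$ or colour $3$. To get this, I would exploit the parity structure of $C_4$-switching:
\begin{itemize}
\item switching by $(1234)^2$ fixes the parity of every edge colour;
\item switching at a single vertex by $(1234)$ flips the parity of exactly the edges at that vertex.
\end{itemize}
So the parity pattern of the colouring behaves like a signed graph under $S_2$-switching, and within the odd edges the even switches act as $S_2$-switching on the pair of colours $\{1,3\}$. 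My first attempt would be to bound the required order through Theorem \ref{thm:allThree}-style counting of triangles, checking whether the resulting bound stays within $R(3,3,3,3)+1\leq 63$.

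The main obstacle is this upper bound for $\mathcal{E}>4$. Theorem \ref{thm:plusOne} alone gives $R(3,\mathcal{E}-1,3,\mathcal{E}-1)+1$, which grows with $\mathcal{E}$. The argument must therefore show that the odd colours alone force a switchable $K_4^{(1)}$ or $K_4^{(3)}$ on $63$ vertices. If the parity reduction fails to deliver this, the fallback is to state the upper bound only for $a_1=a_2=4$.
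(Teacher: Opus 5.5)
There is a genuine gap, and it cannot be filled: the stated upper bound is false once $\min\{a_1,a_2\}$ is large.

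Your lower bound (monotonicity from the $42$-vertex construction) is correct. So is your treatment of the case $\mathcal{E}=\min\{a_1,a_2\}=4$, which is the paper's implicit argument: reduce via Theorem \ref{thm:evenOERamsey} to $R_{C_4}(4,4,4,4)$ and use $R(3,3,3,3)\le 62$. The gap is the upper bound for $\mathcal{E}>4$. Your plan to force a switchable $K_4^{(1)}$ or $K_4^{(3)}$ from the odd entries alone is defeated by the parity observation you yourself list. Switching at a vertex of a triangle with $(1234)^k$ adds $2k$ to the triangle's colour sum. So the parity of every triangle's colour sum is a $C_4$-switching invariant; this is the argument of Theorem \ref{thm:allThree}. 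In a colouring using only colours $2$ and $4$, every triangle sum is even, while a monochromatic triangle of colour $1$ or $3$ has odd sum. Such a colouring therefore never switches to contain $K_3^{(1)}$ or $K_3^{(3)}$, let alone $K_4^{(1)}$ or $K_4^{(3)}$. In particular, if $\min\{a_1,a_2\}\ge 64$, the all-colour-$2$ $K_{63}$ cannot be switched to contain any of the four targets. Hence $R_{C_4}(4,a_1,4,a_2)>63$ and the stated upper bound is false.

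The failure starts much earlier. Suppose a $\{2,4\}$-colouring becomes a monochromatic even $K_s$ ($s\ge 3$) after switching with $(k_i)$. Then all $k_i$ on that clique have equal parity. Each of the at most two classes $\{i: k_i=\kappa\}$ was already monochromatic in the original colouring, since its internal edges all shift by $2\kappa$. Take $b=\lceil \mathcal{E}/2\rceil$ and a $\{2,4\}$-colouring of $K_{R(b,b)-1}$ with no monochromatic $K_b$. This shows $R_{C_4}(4,\mathcal{E},4,\mathcal{E})\ge R(b,b)$; for example, $\mathcal{E}=11$ already gives at least $R(6,6)\ge 102$.

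Your fallback is therefore the right move. The corollary holds when $\min\{a_1,a_2\}=4$. The statement the paper presumably intends is $R_{C_4}(4,4,a_1,a_2)$, the family named just before Theorem \ref{thm:3434}. For that family, Theorem \ref{thm:evenOERamsey} gives $\mathcal{O}=\min\{4,a_1\}=4$ and $\mathcal{E}=\min\{4,a_2\}=4$, and the bounds $43\le R_{C_4}(4,4,a_1,a_2)\le 63$ follow at once.
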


The best known lower bound for $R(3,3,3,3)$ is $51$ \cite{C73}, which improved a previous bound of $50$ \cite{W71}. 
However, in both cases computer search shows that the monochromatic $K_3$-free $4$-edge-coloured graph used in the proof of the bound can be $C_4$-switched to obtain a monochromatic $K_4$.

\section{Conclusions and Future Directions}
In \cite{M24}, the authors prove  $R_{S_2}(4,4) = R(3,3) +1$.
Here we prove $R_{C_3}(4,4,4) = R(3,3,3) + 1$.
Our argument for $R_{C_4}(4,4,4,4) \geq43$ is based on a previous best known bound for $R(3,3,3,3)$.
And so it is natural to wonder whether $R_{C_m}(4,4,\dots, 4) = R(3,3,\dots,3)+1$ for any other values $m \geq 4$.
For fixed $m$, this result holds if and only if there exists an $m$-edge-coloured complete graph on $R(3,3,\dots,3)-1$ vertices containing no monochromatic copies of $K_3$ that also cannot be switched to contain a monochromatic copy of $K_4$.
Regardless, the result relating $R_{C_4}(4,4,4,4)$ and $R(3,3,3,3)$ provides a possible new approach to the long standing open problem of computing this latter value.
If indeed $R(3,3,3,3) > 51$, then it may be possible to construct a $4$-edge-colouring of a complete graph on $n>51$ vertices that does not switch to contain a monochromatic $K_4$.
This would imply $R(3,3,3,3) > n-1>51$.
Though such a graph may contain a monochromatic $K_3$, it would contain in its equivalence class a graph with a vertex deleted subgraph that contains no monochromatic $K_3$.

Recall the construction of $G$ in the proof of Theorem \ref{thm:R418} using the $16$ vertex graph described in \cite{G55}.
Since $G$ cannot be switched to contain a monochromatic copy of $K_4$, when we homogenise at any vertex $u$ with colour $1$ and then remove the homogenised vertex, the remaining graph must contain no monochromatic $K_3$.
Therefore  for every $u \in G$, $G_{u}-u$ is a graph on $16$ vertices that contains no monochromatic $K_3$.
And so it must be one of the two $3$-edge-coloured complete graphs on $16$ vertices that contain no monochromatic $K_3$.
In fact it turns out that for every $u \in V(G)$, $G_u - u$ is exactly the graph given in \cite{G55}.
This implies $G_u \cong G$ for all $u \in V(G)$, which raises the question: for which (complete) $m$-edge-coloured graphs $H$ does there exist a switching sequence $\Pi$ such that $G^\Pi \cong G$ (but $G^\Pi \neq G$)? 
In other words, for which graphs $G$ does there exist $H \in [G]$ such that $H \cong G$?

\bibliographystyle{abbrv}
\bibliography{references.bib}

\end{document}